\documentclass[11pt,reqno]{amsart}

\usepackage[utf8]{inputenc}
\usepackage[margin=1.25in]{geometry}
\parindent=.25in
\usepackage{hyperref}
\usepackage{appendix}
\usepackage{amsfonts}
\usepackage{amsthm}
\usepackage{amssymb}
\usepackage{stmaryrd} 
\usepackage{amsmath}
\usepackage{amsthm}
\usepackage[dvipsnames]{xcolor}
\usepackage{mathrsfs}
\usepackage{lipsum} 

\theoremstyle{plain}
\newtheorem{theorem}{Theorem}[section]
\newtheorem{corollary}[theorem]{Corollary}
\newtheorem{lemma}[theorem]{Lemma}
\newtheorem{Proposition}[theorem]{Proposition}

\newtheorem{Definition}[theorem]{Definition}

\theoremstyle{remark}
\newtheorem{example}[theorem]{Example}

\newtheorem{remark}[theorem]{Remark}

\usepackage{biblatex} 
\addbibresource{sample.bib} 

\numberwithin{equation}{section}
\title[Circular law for band matrices]{The circular law for random band matrices: improved bandwidth for general models}

\author{Yi HAN}
\address{Department of Mathematics, Massachusetts Institute of Technology, Cambridge, MA
}
\email{hanyi16@mit.edu}

\begin{document}

\begin{abstract}
We consider the convergence of the ESD for non-Hermitian random band matrices with independent entries to the circular law, which is the uniform measure on the unit disk in the center of the complex plane. We assume that the bandwidth of the matrix scales like $n^\gamma$ for some $\gamma\in(0,1]$, where $n$ is the matrix size, and the variance profile of the matrix is only assumed to be doubly stochastic with no additional assumption on its specific mixing properties. We prove that the circular law limit holds either (1) when $\gamma>\frac{5}{6}$ and the entries are independent Gaussians, (2) or when $\gamma>\frac{8}{9}$ and the entries are independent subgaussian random variables. This new threshold improves the previous threshold $\gamma>\frac{32}{33}$ which was only proven for block band matrices and periodic band matrices. After the initial version of this paper, the author further extended the range of circular law for much smaller values of $\gamma$ in \cite{han2025circular1} and \cite{han2025circular2} when the variance profile has specific mixing properties, but not for an arbitrary doubly stochastic variance profile. Thus the main contribution of this paper is the circular law for a genuine power law bandwidth for any doubly stochastic variance profile. We also prove an extended form of product circular law with a growing number of matrices. Weak delocalization estimates on eigenvectors are also derived. The new technical input is new polynomial lower bounds on some intermediate small singular values, and this estimate does not depend on the specific structure of the variance profile beyond the fact that it is doubly stochastic.

\end{abstract}

\maketitle

\section{Introduction}

For an $n\times n$ matrix $A$ with eigenvalues $\lambda_1(A),\cdots,\lambda_n(A)\in\mathbb{C}$ we denote its empirical measure of eigenvalues by 
$$
\mu_A:=\frac{1}{n}\sum_{i=1}^n \delta_{\lambda_i(A)},
$$ where $\delta_z$ is the Dirac measure at point $z$. 

For a symmetric random matrix $A$, the convergence of $\mu_A$ to the semicircle distribution under mild assumptions dates back to the celebrated work of Wigner \cite{wigner1958distribution}. The spectrum of symmetric random matrices can essentially be determined via the method of moments.
For non-Hermitian random matrix $A$, the convergence of $\mu_A$ does not follow from method of moments arguments (see for example \cite{MR2908617}), and fundamentally new ideas are needed.
When all the entries of $A=(a_{ij})_{1\leq i,j\leq n}$ are i.i.d. with mean zero and variance one, the circular law theorem, i.e. the convergence of $\mu_A$ to the uniform distribution on the unit disk in the complex plane, is one of the fundamental theorems in modern random matrix theory. This circular law was proven under increasing generality in a series of works including \cite{girkoarticle}, \cite{MR1428519}, \cite{tao2008random}, \cite{ WOS:000281425000010}, and we refer to the references in these works and \cite{MR2908617} for a more complete list.
Beyond dense matrices with i.i.d. entries, the circular law was subsequently proven for sparse i.i.d. matrices \cite{MR3980923}, \cite{MR3945840} \cite{sah2023limiting}, for sparse regular digraphs \cite{MR4195739}, and for some random matrices with inhomogeneous variance profile \cite{MR3878135}. A more complete list of references can be found in the review \cite{MR4680362}.

Despite all the recent efforts in proving the circular law, effective techniques for structured non-Hermitian random matrices with inhomogeneous variance profile are still out of reach. In these models, a large part of entries are always set to be zero. (Note that for dense inhomogeneous matrices \cite{MR3857860}, \cite{MR3878135} developed a general technique to prove circular law, but the approach becomes ineffective for sparse inhomogeneous models). The guiding examples of sparse inhomogeneous matrices are random band matrices, which we will define shortly. Their Hermitian analogue, namely Hermitian Random band matrices, play a fundamental role in modern mathematical physics. They serve as prototypical models of Anderson transition: when the bandwidth scales like $n^\gamma$ for $\gamma>\frac{1}{2}$, the eigenvectors are expected to be delocalized and eigenvalues have GOE local statistics; whereas a localized phase and Poisson statistics should occur when $\gamma<\frac{1}{2}$. Some very recent progress on this topic can be found in \cite{yau2025delocalization} , \cite{erdHos2025zigzag}, \cite{dubova2025delocalization}, \cite{dubova2025delocalization3d}, \cite{drogin2025localization},\cite{shcherbina2025characteristic} and the references therein.

In this work we study non-Hermitian versions of random band matrix, where even global universality, i.e. the convergence of the empirical spectral density to the circular law, remains largely unproven. Two recent papers \cite{jain2021circular} \cite{tikhomirov2023pseudospectrum} justified the circular law for certain non-Hermitian band matrices with bandwidth $n^\gamma$ with $\gamma$ sufficiently close to 1: more precisely they require $\gamma>\frac{32}{33}$. The main contribution of this paper is to consider narrower bandwidth and expand the variety of models where global universality holds: we show $\gamma>\frac{5}{6}$ is enough for the circular law limit for Gaussian entries whenever the variance profile is doubly stochastic. 

\subsection{Circular law for general models}

\begin{Definition}
A real or complex random variable $\xi$ of mean zero, variance one is said to be $K$-subGaussian if $$\mathbb{E}\exp(|\xi|^2/K^2)\leq 2.$$

A mean 0, variance 1 random variable $\zeta$ has finite $p$-th moment for all $p$, if for all $p\in\mathbb{N}_+$ we can find $C_p>0$ such that 
$$
\mathbb{E}[|\xi|^p]\leq C_p<\infty,\quad\forall p\in\mathbb{N}_+.
$$
\end{Definition}

We consider the following very general definition of inhomogeneous matrices:

\begin{Definition}[General model]\label{generalmodel}
 We define an ensemble of inhomogeneous random matrices as follows: $X=(x_{ij})$ is a $n\times n$ random matrix with independent entries. For each $(i,j)\in[n]^2$,  $x_{ij}$ is distributed as $b_{ij}\xi$, where $b_{ij}\geq 0$ and $\xi$ has mean zero, variance one, and is $K$-subgaussian for some fixed $K>0$. The bandwidth of this general matrix model is defined by $$b_n^{-1}=\sup_{(i,j)\in[n]^2}b_{ij}^2.$$
   We assume that $X$ has a doubly stochastic variance profile:
   $$
\mathbb{E}[XX^*]=\mathbb{E}[X^*X]=I_n.
   $$ In other words, we assume that 
   $$
\sum_{j\in[n]} b_{ij}^2=1\quad \forall i\in[n],\quad 
    \sum_{i\in[n]} b_{ij}^2=1\quad \forall j\in[n].$$
\end{Definition}

The generality of this definition is already illustrated by the following subclasses of models: consider $G_n=([n],\mathcal{E})$ a $b_n$- regular directed graph on $n$ vertices. Then we set $b_{ij}=\frac{1}{\sqrt{b_n}}$ if $(i,j)\in\mathcal{E}$ and $b_{ij}=0$ otherwise. Observe that the graph $G_n$ can be arbitrary and we never require the structure of $G_n$ to be sufficiently pseudo-random.

\begin{example} (Well-mixing variance profiles) We first review some typically used variance profiles for a random band matrix in its non Hermitian form, that have rapid mixing properties. The first example is the following defined periodic band matrix.
 Let 
    $X$ be an $n\times n$ matrix. We say $X$ is sampled from a block band matrix ensemble with bandwidth $b_n$ if it has the form
\begin{equation}\label{blockcanonicalform}
X=\begin{pmatrix} D_1&U_2&&&T_m\\T_1&D_2&U_3&&\\&T_2&D_3&\ddots&\\&&\ddots&\ddots&U_m \\ U_1&&&T_{m-1}&D_m \end{pmatrix}
\end{equation}
where the unfilled sites are set zero. The blocks $D_1,U_1,T_1,\cdots,D_m,U_m,T_m$ are $b_n/3\times b_n/3$ independent random matrices with i.i.d. entries having the same distribution $\frac{1}{\sqrt{b_n}}\xi$, where $\xi$ is a standard Gaussian.

The second example is periodic band matrices defined as follows. Let 
    $X$ be an $n\times n$ square matrix. We say $X$ is sampled from a periodic band matrix model with bandwidth $b_n$ if $x_{ij}=0$ for any $\frac{b_n-1}{2}<|i-j|<n-\frac{b_n-1}{2}$, and $x_{ij}$ are i.i.d. copies of $\frac{1}{\sqrt{b_n}}\xi$ if $|i-j|\leq \frac{b_n-1}{2}$ or $|i-j|\geq n-\frac{b_n-1}{2}$. Here $\xi$ is a standard gaussian and we assume $\frac{b_n-1}{2}$ is an integer.
\end{example}

In some other concrete settings of applications, and as a conceptual question itself, it is of major interest to study variance profiles that do not have good mixing properties. For example, we can consider the following variance profiles that are vertical translations of the variance profile of a block or periodic band matrix:

\begin{example}\label{example1.4}(Variance profiles without good mixing) For a given bandwidth $b_n$ that is divisible by $n$, we can consider the following two variance profiles: (1) shifted periodic band matrix where for example we take $b_{ij}=\frac{1}{\sqrt{b_n}}\mathbf{1}(\min(|i-j-3b_n|,|n-i+j+3b_n|)\leq \frac{b_n-1}{2})$, this corresponds to right-shifting the variance of a periodic band matrix by $3b_n$; and (2) variance profile from matrix product, where $b_{ij}=\frac{1}{\sqrt{b_n}}\mathbf{1}(\lfloor\frac{j-1}{b_n}\rfloor-\lfloor\frac{i-1}{b_n}\rfloor=1\text{ or }1-\frac{n}{b_n})$, which corresponds to preserving only the $U_i$ blocks of $X$ in \eqref{blockcanonicalform}, setting all other blocks of $X$ to be zero and taking a suitable rescaling to be doubly stochastic.

\end{example}

The main results of this paper are as follows.

\begin{theorem}[Circular law for general model]\label{circulargeneral}
    Let $X$ be the general inhomogeneous random matrix model defined in Definition \ref{generalmodel}, and we assume that 
    \begin{enumerate}
        \item 
   
    Either $\xi$ has Gaussian distribution, and $b_n\geq n^{\frac{5}{6}+\epsilon}$ for some $\epsilon>0$, \item or $\xi$ is $K$-subgaussian for some $K>0$, and $\xi$ is either real-valued with a distributional density on $\mathbb{R}$ bounded by some $L>0$, or $\xi$ has independent real and imaginary parts, both having a distributional density bounded by some $L>0$. 
    
    \end{enumerate} Then the ESD $\mu_X$ of $X$ converges in probability to the circular law.
\end{theorem}

\begin{remark}
    Two recent papers by the author, \cite{han2025circular1} and \cite{han2025circular2}, show that when the variance profile has good mixing properties, then $\frac{5}{6}+\epsilon$ is not the optimal threshold of circular law. Thus Theorem \ref{circulargeneral} is suboptimal for block band matrices and periodic band matrices and the result is superseded in \cite{han2025circular1}. However, neither of these papers cover an arbitrary variance profile: for instance, they do not cover all the examples in Example \ref{example1.4}. Thus Theorem \ref{circulargeneral} remains the first result of circular law for these very general variance profiles, and is a strong justification that arbitrary normalized variance profiles can have the circular law limit in this sublinearly growing bandwidth regime.
\end{remark}

Via a further application of the linearization trick, we prove a circular law for the product of a growing number of i.i.d. matrices:

\begin{theorem}\label{theorem1.616}
    Let $X_1,\cdots,X_{m_n}$ be independent $n\times n$ random matrices with i.i.d. entries having distribution $\frac{1}{\sqrt{n}}\xi$. Let $m=m_n$ be $n$-dependent and such that $m_n\leq n^{\frac{1}{8}-\epsilon}$ for some $\epsilon>0$. Assume that $\mathbb{E}[\xi]=0,\mathbb{E}[|\xi|^2]=1$ and $\xi$ has finite $p$-th moment for all $p\in\mathbb{N}_+$. Further assume that $\xi$ is either real-valued with distributional density bounded by $L>0$, or $\xi$ has independent real and imaginary parts, at least one of them has distributional density bounded by $L>0$. Denote by 
$$
\mathcal{M}:=(X_1\cdots X_{m_n})^\frac{1}{m_n}.
$$ Then in the limit $n\to\infty$, we have that the empirical measure $\mu_{\mathcal{M}}$ converges in probability to the circular law.\end{theorem}
Note that in this result, we assume a slightly weaker moment assumption on $\xi$.

\begin{remark} It is not clear whether or not we can consider a discrete entry law $\xi$ in Theorem \ref{theorem1.616}, due to instability of the linearized matrix $\mathcal{L}$ \eqref{welinearizeit}. This instability of $\mathcal{L}$ disqualifies us from generalizing the proof of \cite{jain2021circular}, Theorem 2.1 to this setting. When $m_n$ is fixed and independent of $n$, the circular law  of $\mathcal{M}$ has been proved in \cite{MR2861673} under a $2+\eta$-th moment condition. That is, the eigenvalues of $X_1\cdots X_{m_n}$ converges to the so-called $m_n$-fold product circular law. A local circular law has also been proven for product matrices \cite{MR3622892}, and subsequently universality of linear statistics for i.i.d matrix products are derived \cite{MR4112718}.  Via adapting the proof in \cite{jain2021circular}, we can possibly relax the moment condition on $\xi$ in Theorem \ref{theorem1.616} to, say, assuming only a finite six-th moment, at the cost of allowing a much smaller rate of growth of $m_n$. As the focus here is on large $m_n$, we sacrifice with the moment assumptions.

Optimally, one expects to prove the circular law for all $m_n\leq n^{1-\epsilon}$, but this is beyond the techniques in this work. Indeed, the proof of Theorem \ref{theorem1.616} relies on the proof of circular law for the linearization matrix \eqref{welinearizeit} of this matrix product,  but this linearization matrix does not satisfy the specific variance properties stated in \cite{han2025circular1} because the variance profile does not have rapid mixing property (see Example \ref{example1.4}), so it precludes the regime $m_n\sim n^{1-c}$ stated in \cite{han2025circular1}.
The threshold $m_n\sim n$ has reached a lot of recent attention. Informally, $m_n\ll n$ is the free probability regime where universality is expected to hold for the product matrix; $m_n\sim n$ is the critical transition regime and $m_n\gg n$ is the ergodic regime (and there should be no universality). We refer to \cite{MR4401507}, \cite{MR4421171} and \cite{MR4580535} for integrable probability perspectives on product random matrices with simultaneously growing $m_n$ and $n$, and to \cite{MR4268303} for an ergodic theory perspective. 

\end{remark}

\subsection{Weak delocalization estimates}

A major component of our proof is to upper bound the Green function $G(z)$ of a dilated version \eqref{dilationmatrix} of $X$, at some scales $\Im z\ll1$. We cannot reach the optimal scale $\Im z\sim n^{-1+\epsilon}$ and are indeed very far from that, but such a bound appears to be new and implies interesting delocalization estimates.

We introduce the most general model where our delocalization statement is valid.

\begin{theorem}[Weak delocalization]\label{delocalizationtheorem} Consider a general random matrix ensemble in Definition \eqref{generalmodel}. Assume that $b_n\geq n^\epsilon$ for some sufficiently small $\epsilon>0$. Then for any (sufficiently small) $\mathbf{c}>0$ there holds

\begin{enumerate}
    \item If $\xi$ has (real or complex) Gaussian distribution. then with probability at least $1-n^{-10}$, all eigenvectors $\psi$ of $X$ with unit $L^2$ norm satisfy 
    $$
\|\psi\|_\infty\leq (b_n)^{-1/10}n^\mathbf{c}.
    $$
    \item For a general mean zero, variance 1 random variable $\xi$ having all $p$-th moment finite, then with probability at least $1-n^{-10}$, all eigenvectors $\psi$ of $X$ with unit $L^2$ norm satisfy 
     $$
\|\psi\|_\infty\leq (b_n)^{-1/16}n^\mathbf{c}.
    $$
\end{enumerate}
    
\end{theorem}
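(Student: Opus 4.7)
The plan is to reduce the $\ell^\infty$ bound on eigenvectors of $X$ to a uniform upper bound on the resolvent of the Hermitian dilation $\mathcal{H}_z := \begin{pmatrix} 0 & X-zI \\ (X-zI)^\ast & 0\end{pmatrix}$, namely $G(z,i\eta) := (\mathcal{H}_z - i\eta)^{-1}$. The key observation is that any unit right eigenvector $\psi$ of $X$ with eigenvalue $\lambda$ is simultaneously a right singular vector of $X-z$ for every $z\in\mathbb{C}$, with singular value $|\lambda-z|$. Expanding the lower-right diagonal block of $\operatorname{Im} G$ in the singular-vector basis of $X-z$ yields the pointwise lower bound
\begin{equation*}
\operatorname{Im} G_{n+i,n+i}(z,i\eta) \;\geq\; \eta \cdot \frac{|\psi_i|^2}{|\lambda-z|^2 + \eta^2},
\end{equation*}
so that for any $z$ satisfying $|\lambda-z|\leq\eta$ one obtains $|\psi_i|^2 \leq 2\eta\,\operatorname{Im} G_{n+i,n+i}(z,i\eta)$. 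This passes the problem from the (random) location $\lambda$ to any nearby deterministic point.

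I would then discretize: since $\|X\|_{\mathrm{op}}$ is $O(1)$ with probability $1-O(n^{-10})$, every eigenvalue of $X$ lies in a bounded disk, which can be covered by an $\eta$-net $\mathcal{N}$ of cardinality polynomial in $n$. By the displayed inequality, it suffices to prove, with probability $1-O(n^{-10})$, the uniform resolvent bound
\begin{equation*}
\max_{z_0\in\mathcal{N}}\;\max_{j}\;\operatorname{Im} G_{jj}(z_0,i\eta) \;\leq\; n^{\mathbf{c}},
\end{equation*}
taken at the scales $\eta = b_n^{-1/5}$ in the Gaussian case and $\eta = b_n^{-1/8}$ in the general case. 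Plugging these into $|\psi_i|^2 \leq 2\eta\, n^{\mathbf{c}}$ and absorbing the constant into $\mathbf{c}$ gives exactly $\|\psi\|_\infty \leq b_n^{-1/10} n^{\mathbf{c}}$ and $b_n^{-1/16} n^{\mathbf{c}}$ respectively. The union bound over $\mathcal{N}$ is cheap; the work is in the per-point estimate.

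The heart of the matter, and the expected main obstacle, is the uniform diagonal bound on $G(z_0,i\eta)$ at a scale $\eta$ which is a small polynomial in $b_n$. Using the identity $\operatorname{Im} G_{ii}(z_0,i\eta) = \eta\,[(X-z_0)(X-z_0)^\ast + \eta^2]^{-1}_{ii}$, the question becomes how much of the coordinate vector $e_i$ can concentrate on singular vectors of $X-z_0$ whose singular values sit below $\eta$. This is exactly where the ``polynomial lower bounds on intermediate small singular values'' advertised in the introduction enter: they limit the count of singular values of $X-z_0$ at each scale down to $\eta$. Combined with a Schur-complement expansion writing $G_{ii}^{-1} = -i\eta - h_i^\ast G^{(i)} h_i$ and a concentration inequality for the quadratic form $h_i^\ast G^{(i)} h_i$, one closes an approximate self-consistent bound on the diagonal entries. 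The split between the Gaussian exponent $1/10$ and the general exponent $1/16$ reflects the strength of the available concentration, which is genuinely sharper for Gaussian entries than the Hanson--Wright/truncation estimate that Assumption~\ref{assumption1.414} enables; it is this gap in concentration that determines how small $\eta$ can be pushed while still keeping the diagonal resolvent bounded by $n^{\mathbf{c}}$. The conceptual difficulty is avoiding circularity — standard local laws require eigenvector delocalization of $\mathcal{H}_z$ as input — which is precisely circumvented by substituting the input-level singular-value bounds, at the price of not reaching the optimal scale $\eta\sim n^{-1+\epsilon}$.
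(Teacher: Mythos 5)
Your reduction of the eigenvector bound to a diagonal resolvent bound at scale $\eta\sim b_n^{-1/5}$ (resp.\ $b_n^{-1/8}$) is the correct skeleton, and matches the paper's: one evaluates the Hermitian dilation's resolvent near the eigenvalue, extracts $|\psi_i|^2\le C\eta$ from $\operatorname{Im}G_{n+i,n+i}\le C$, and takes square roots to get $b_n^{-1/10}$ and $b_n^{-1/16}$. A small quibble: your stated ``key observation'' that $\psi$ is a right singular vector of $X-z$ for \emph{every} $z$ with singular value $|\lambda-z|$ is false (for $z\neq\lambda$, $\psi$ is an eigenvector of $X-z$ but not in general a singular vector, since $(X-z)^*(X-z)\psi\neq|\lambda-z|^2\psi$). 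Your displayed inequality $\operatorname{Im}G_{n+i,n+i}(z,i\eta)\ge \eta|\psi_i|^2/(|\lambda-z|^2+\eta^2)$ is nevertheless true, but because of Cauchy--Schwarz in the $M$-inner product with $M=(X-z)^*(X-z)+\eta^2$: indeed $|\langle e_i,\psi\rangle|^2\le (e_i^*M^{-1}e_i)(\psi^*M\psi)$ and $\psi^*M\psi=|\lambda-z|^2+\eta^2$. The discretization step then parallels the paper's grid-plus-Lipschitz argument; at scale $\eta$ no new ideas are needed there.

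Where the proposal genuinely goes off course is in ``the heart of the matter'': the mechanism you propose for establishing the uniform diagonal resolvent bound. You suggest a Schur-complement/self-consistency argument $G_{ii}^{-1}=-i\eta-h_i^*G^{(i)}h_i$ with quadratic-form concentration, patched by ``input-level singular-value bounds'' to break circularity. This is precisely the route the paper says cannot be made to work here: the relevant variance profile has a vanishing spectral gap in the $n\to\infty$ limit, so the matrix Dyson equation for the dilated model is \emph{not} stable, and the local-law bootstrap of \cite{MR3770875} (which is what the Schur-complement self-consistency scheme amounts to) cannot be iterated down in $\eta$. Moreover, the ``polynomial lower bounds on intermediate small singular values'' you invoke as input are, in the paper's logic, a \emph{consequence} of the Green function bound (via the rigidity estimate in Corollary~\ref{corollary3.45} / \ref{corollary3.45news}), not something available beforehand; the only a-priori singular value estimates are the super-exponentially small lower bounds on $\sigma_{\min}$ from Theorems~\ref{boundsboundsboundsss} and~\ref{tikhoroveheorem}, which are far too weak to control the count of small singular values of $X-z_0$. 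So the proposed fix is circular. The actual technical input the paper uses to get the resolvent bound at scale $b_n^{-1/5}$ (Gaussian) and $b_n^{-1/8}$ (general) is Propositions~\ref{proposition3.1212} and~\ref{newproposition3.2}, which apply the free-probability resolvent concentration machinery of \cite{bandeira2023matrix} (resp.\ \cite{brailovskaya2022universality}) as a black box; those results quantify the distance of $\mathcal{G}_z(\eta)$ to its deterministic free-probability limit in operator norm directly, without any self-consistent bootstrap or any eigenvector-delocalization input, and this is what the whole argument rests on.
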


\begin{remark}(Optimality and generality of delocalization upper bound)
    Theorem \ref{delocalizationtheorem} wins by generality but may be suboptimal for specific instances.
In the i.i.d. setting where $b_n=n$, this estimate is clearly suboptimal: for a random matrix with i.i.d. entries, complete delocalization (i.e.$\|\psi\|_\infty\leq n^{-1/2+\epsilon})$  has been proven in \cite{MR3405592}, see also \cite{MR3770875} for the inhomogeneous case and \cite{MR4388923} for the elliptic case. When the variance profile has good mixing properties, better delocalization estimates follow from the recent paper \cite{han2025circular1}. However, for an arbitrary doubly stochastic variance profile, this weak delocalization result is new.
\end{remark}

\subsection{Main ideas and outline}\label{mainideas}

We follow the classical Hermitization trick of Girko to establish the circular law, which we refer to \cite{MR2908617} for a thorough introduction and historical review. 

For an $n\times n$ matrix $A$ we denote by $\sigma_1(A)\geq\sigma_2(A)\cdots\geq\sigma_n(A)$ the list of singular values of $A$, arranged in decreasing order.

Thanks to the replacement principle by Tao and Vu (see Theorem \ref{replacements}), the main technical step to prove circular law is to show convergence of the logarithmic sums of singular values of shifted matrix $X_z:=X-zI_n$, $\sum_{i=1}^n\log\sigma_i(X-zI_n)$, towards $\sum_{i=1}^n\log\sigma_i(G-zI_n)$ for a.e. $z\in\mathbb{C}$, where $G$ is an $n\times n$ matrix with i.i.d. Gaussian entries.

Via some analytical arguments that are standard by now, it is not difficult to prove that the convergence of log potentials holds asymptotically if we remove the smallest $\epsilon n$ numbers of singular values of both $X-zI_n$ and $G-zI_n$, and thus the major task is to show the contribution of the $\epsilon n$ smallest $\sigma_i$'s to the sum is negligible in the limit.

For this purpose one needs to derive a lower bound for $\sigma_{min}(X-zI_n)$. Developing a lower bound on the least singular value for $X-zI_n$ has been a fairly active research topic, and we refer to \cite{MR4680362} for a review of recent breakthroughs. However, for inhomogeneous random band matrices with sub-linear bandwidth, few quantitative estimates are known. The work \cite{jain2021circular} made the first progress in considering the block band matrix, which was followed by \cite{tikhomirov2023pseudospectrum} who considered more general models. The bounds obtained in both papers are of the form $\sigma_{min}\geq e^{-n^{\frac{n}{ d_n}}}$ where $d_n$ is the bandwidth.

Using this crude bound on $\sigma_{min}$, both \cite{jain2021circular} and  \cite{tikhomirov2023pseudospectrum}proved circular law for certain band matrices with bandwidth $d_n\geq n^{\frac{32}{33}}$ (resp.$d_n\geq n^{\frac{33}{34}}$). To prove the circular law, the two works derived a polynomial convergence rate (see Theorem \ref{theorem5.11}) of the empirical measure of singular values of $X_zX_z^*$ to $G_zG_z^*$ in Kolmogorov distance, and the resulting exponent $\frac{32}{33}$ arises from a careful balancing of this convergence rate and the lower bound on $\log \sigma_{min}(X_z)$.

As one may expect, the bound $n^\frac{32}{33}$ appears to be far from optimal. A natural direction is to make a significant improvement of the lower bound on $\sigma_{min}(X_z)$, as mentioned in the end of \cite{jain2021circular}, Section 1.1. However we are not able to do so at present, and we feel such improvement may not be possible for general models. 

Instead, we propose that we can make an improvement by obtaining a polynomial lower bound for small-ish singular values $\sigma_i(X_z)$ already for $i\sim n-n^\tau$ and for some specified $\tau>0$. The smaller the value of $\tau$, the better convergence for circular law would be. By using this polynomial bound for $\sigma_i(X_z)$ and  $i\leq n-n^\tau$, we can quantitatively lower the exponent $\frac{32}{33}$ to a much smaller value.

How do we prove this polynomial lower bound? We will show convergence of the Stieltjes transform of a dilated version \eqref{dilationmatrix} of $X_z$ to the deterministic limit, and the convergence holds for some mesoscopic $\Im\eta\sim n^{-\tau}$ and some $\tau>0$. Thus the Green function of \eqref{dilationmatrix} is bounded for such $\eta$, and this immediately implies a rigidity estimate on singular values of $X_z$, and yields a polynomial lower bound for all but a few smallest singular values of $X_z$.

How far can this estimate improve the bandwidth? Suppose the Green function is bounded up to the scale $\Im\eta\sim (b_n)^{-1+\epsilon}$, then we can have a polynomial lower bound for all but the smallest  $n^\epsilon\frac{n}{b_n}$ singular values. For these smallest ones we use the bound $|\log\sigma_i(X_z)|\lesssim \frac{n^{1+\epsilon}}{b_n}$ for some $\epsilon>0$, which directly follows from Theorem \ref{theorem2.2whatsapp} or \ref{tikhoroveheorem}. Then if we have $b_n\geq n^\tau$ and $\tau>\frac{1}{2}$, and taking $\epsilon$ small, we should have that the contribution of these parts vanish in the limit. This would lead to a threshold $\tau=\frac{1}{2}$.

By the time the paper was initially written, it was not clear how to show the Green's function of \eqref{dilationmatrix} is bounded up to $\Im\eta\gg (b_n)^{-1}$. For an arbitrary variance profile, we can merely prove it for $\Im\eta\gg (b_n)^{-1/5}$. To compare, for a symmetric (or Hermitian) random band matrix with bandwidth $b_n$, local semicircle law can be easily established up to $\Im\eta\gg (b_n)^{-1}$ (see \cite{MR3068390}), but this result requires some mixing properties of the variance profile. After the initial version of this paper was completed, the author succeeded in adapting the proof of local inhomogeneous circular law in \cite{MR3770875} to our non-Hermitian setting in \cite{han2025circular1}, under the assumption that the variance profile has certain mixing properties. This achieves the threshold $\tau=\frac{1}{2}$. However, if we consider an arbitrary doubly stochastic variance profile, we cannot use the inductive strategy in \cite{MR3770875} to improve the scale of $\Im\eta$ and thus we do not succeed in proving any local law from this method. We also remark that in a different context, Wegner estimates have been proven in \cite{MR3915294}, \cite{MR2525652} for certain Hermitian random band matrices which imply a polynomial lower bound on the least singular value. The proofs are however not adaptable in our model because there are no random potentials on the diagonal in the Hermitization of our non-Hermitian band matrix.

To handle an arbitrary variance profile, our method to bound the Green function of \eqref{dilationmatrix} up to $\Im\eta\gg (b_n)^{-1/5}$ is to use a general machinery developed in \cite{bandeira2023matrix} that yields quantitative convergence estimates of the resolvent to its free probability limit for some mesoscopic $\Im\eta$. For the non-Gaussian case we use instead the machinery in \cite{brailovskaya2022universality}. The key point here is that \cite{bandeira2023matrix} already yields convergence of Green function at some mesoscopic $\Im\eta$ via concentration inequalities, without using stability of the matrix Dyson equation. This is very helpful because the stability of matrix Dyson equation is not guaranteed for an arbitrary variance profile. 
Another benefit of applying the free probability approach in \cite{bandeira2023matrix}, \cite{brailovskaya2022universality} is that it applies immediately to any general model (Definition \ref{generalmodel}) with doubly stochastic variance profile, whereas \cite{jain2021circular} only considers matrices that have a certain block structure on the diagonal. 

Although the threshold of this paper was recently surpassed by \cite{han2025circular1} and \cite{han2025circular2} for specific models, its applicability to any doubly stochastic variance profile remains attractive and unique. Moreover, the idea highlighted in this paper, that improved rigidity estimates strengthen the circular law limit, is also one of the prevailing themes in \cite{han2025circular1} and \cite{han2025circular2}: indeed, both works still use the proof idea of this paper (via free probability) in certain intermediate steps, and the reason why these papers have sharper thresholds is that they combine many new ideas and techniques into the proof.

\section{A list of used results}

In this section we outline a list of results on smallest singular values of random band matrices, and convergence rate of Stieltjes transform. All these results have been proven elsewhere.

\subsection{The smallest singular value lower bound}

We will need a result on the smallest singular value for (non-Hermitian) random band matrix by Tikhomirov \cite{tikhomirov2023pseudospectrum}.

\begin{theorem}\label{tikhoroveheorem}[\cite{tikhomirov2023pseudospectrum}, Theorem 3.4] Let $K,\rho_0>0$ be fixed. Consider a random matrix $A=V\odot W$ where $W$ is a real or complex square random matrix of size $n$, $V$ is a deterministic matrix with non-negative entries, and $\odot$ is the Hadamard (i.e., entrywise) product. Denote by 
$$\sigma^*:=\max_{i,j\leq n}|V_{i,j}|,\quad \sigma:=\max\left(\max_{j\leq n}\sqrt{\sum_{i=1}^nV_{i,j}^2},\max_{i\leq n}\sqrt{\sum_{j=1}^n V_{i,j}^2}
\right).$$
    We further assume that the following two conditions both hold:
    \begin{enumerate}
    \item The entries of $W$ are independent, $K$-sub-Gaussian, having mean zero and a unit second moment. \item The entries of $W$ are either real with a density bounded from above by $\rho_0,$ or entries have independent real and imaginary parts and both have distribution densities bounded from above by $\rho_0$. 
    \end{enumerate}

    Then for any given $R>1$ and $\kappa\in(0,1]$, for any given $z\in\mathbb{C}$ that satisfies $$|z|>\max(\sigma^*n^{2\kappa},\frac{\sigma}{R}),$$ we have that whenever $n$ is sufficiently large (depending on $K,R,\rho_0,\kappa$), with probability at least $1-2n^{-2}$,
    $$
s_{min}(A-zI_n)\geq|z|\exp\left(-R^2n^{3\kappa}(\frac{\sqrt{n}\sigma^*}{\sigma})^2\right).
    $$
\end{theorem}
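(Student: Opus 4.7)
The plan is to follow the Rudelson--Vershynin invertibility framework, adapted to the inhomogeneous sparse setting dictated by the variance pattern $V$. I would decompose the unit sphere into \emph{compressible} vectors (those within distance $\rho$ of some $\delta n$-sparse vector) and \emph{incompressible} vectors, and bound $s_{min}(A-zI_n)=\inf_{\|x\|=1}\|(A-zI_n)x\|$ from below on each piece, combining via a union bound.

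For the compressible part, I would first invoke a subGaussian matrix concentration bound of Bandeira--van Handel type to obtain $\|A\|_{op}\lesssim \sigma$ with high probability. Since the hypothesis gives $|z|\gtrsim \sigma$, the shifted matrix $A-zI_n$ is a perturbation of $-zI_n$ on any compressible $x$, so $\|(A-zI_n)x\|\gtrsim |z|$ follows from a standard $\epsilon$-net of cardinality roughly $\binom{n}{\delta n}(C/\epsilon)^{\delta n}$, combined with a subGaussian tail bound on $\|(A-zI_n)x\|$ about its mean for each fixed $x$ in the net. For small enough $\delta$, the union bound closes.

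For the incompressible part, I would use the standard reduction $s_{min}(M)\geq n^{-1/2}\min_i \mathrm{dist}(R_i,H_i)$ where $R_i$ is the $i$-th row of $A-zI_n$ and $H_i$ is the span of the rest. Conditioning on all rows except the $i$-th, the distance equals $|\langle R_i - z e_i, v\rangle|$ for $v$ a unit normal to $H_i$, and the bounded-density hypothesis (ii) produces a small-ball estimate of order $\rho_0 V_{ij}|v_j|$ per random coordinate. The main obstacle, and where the exponential factor originates, is that in the inhomogeneous/sparse regime the random normal $v$ may concentrate on coordinates where $V_{ij}=0$, so single-row anti-concentration is much too weak.

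To overcome this, I expect a \emph{cascade/propagation} argument: one chains together $\sim (\sqrt{n}\sigma^*/\sigma)^2$ overlapping rows (this count matches $n/d_n$ for a bandwidth-$d_n$ model), transferring the anti-concentration from link to link via the distance estimates on each successive row. Each link contributes a polynomial small-ball factor, while the slack $n^{3\kappa}$ absorbs the union bound over the combinatorial structure of the support of $v$; the hypothesis $|z|>\sigma^* n^{2\kappa}$ provides the necessary ``diagonal dominance at scale $n^{2\kappa}$'' so that each propagation step inherits an invertibility gain from the $-zI_n$ shift rather than decaying. Multiplying the per-link gains yields the advertised bound $|z|\exp(-R^2 n^{3\kappa}(\sqrt{n}\sigma^*/\sigma)^2)$. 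The hardest technical step is setting up the cascade so that the bounded-density coordinates exposed at each stage are truly independent of $v$ conditional on the revealed information, which requires a careful ordering of the rows and an inductive revelation of randomness along the chain.
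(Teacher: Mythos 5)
This theorem is a citation (labeled [\cite{tikhomirov2023pseudospectrum}, Theorem 3.4]); the paper under review does not prove it and does not even sketch a proof, so there is no ``paper's own proof'' against which to compare. Evaluating your proposal on its own merits, the central and decisive step --- the ``cascade/propagation'' argument for the incompressible regime --- is not actually constructed; you name the obstacle (the random normal $v$ may live on coordinates invisible to a given sparse row) and assert that chaining $\sim (\sqrt{n}\sigma^*/\sigma)^2$ overlapping rows fixes it, but you give no mechanism for how anti-concentration is transferred from link to link, no precise small-ball lemma per link, and, as you yourself flag, no construction of the revelation ordering that keeps the exposed coordinates conditionally independent of the accumulated information. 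Since this is precisely where all of the difficulty lies, the proposal amounts to an outline of the RV scaffolding plus an unproved conjecture that the sparsity obstruction can be ``cascaded away.''

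There are also two more local concerns. First, in the compressible step you invoke $|z|\gtrsim\sigma$ to argue $A-zI_n$ is a small perturbation of $-zI_n$; but the hypothesis only gives $|z|>\sigma/R$ with $R>1$ arbitrary, so $\|A\|\approx\sigma$ can exceed $|z|$ by the factor $R$, and even on sparse vectors the operator norm of the restricted matrix need not be small in the inhomogeneous setting (a band matrix with bandwidth $d$ has $\|A\|_{S}\|\approx\sigma$ whenever $|S|\gtrsim d$, so unless $\delta n\ll d_n$ the ``perturbation'' picture fails). A concentration bound about the mean can rescue this, as you suggest in passing, but then you need to track the variance of $\|(A-zI_n)x\|^2$ in terms of $V$ and check that the union bound over a $\binom{n}{\delta n}(C/\epsilon)^{\delta n}$-sized net closes; this is nontrivial when the variance profile is as sparse as $V$ is allowed to be, and is not addressed. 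Second, the exponential form of the final bound is a warning sign that the mechanism is \emph{not} a single net-plus-distance argument: a single incompressible-vector distance bound in the RV framework typically yields a polynomially small lower bound, whereas here the bound degrades like $\exp(-n^{3\kappa}\cdot n(\sigma^*)^2/\sigma^2)$, i.e.\ one loses an $\exp(-n^{3\kappa})$ factor per ``layer.'' That strongly suggests an iterative inversion scheme (e.g.\ repeated Schur complements / block Gaussian elimination through the support graph of $V$, using the bounded-density hypothesis to keep pivots from degenerating and using $|z|>\sigma^* n^{2\kappa}$ for diagonal dominance at each stage), rather than the compressible/incompressible dichotomy. Your ``cascade'' intuition is pointing in roughly that direction, but it is not embedded in the right framework and is not carried out, so as written this is not a proof.
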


It is well known (see references in \cite{MR4680362}) that for a square matrix with i.i.d. entries, its minimal singular value is with high probability at least $n^{-C}$ for some $C>0$. 
However, all the bounds presented here for band matrix are super-polynomially small in $n$, and it is far from obvious whether they are sharp or not (say, is Theorem \ref{tikhoroveheorem} sharp for random matrices with doubly stochastic variance profile, as mentioned in \cite{tikhomirov2023pseudospectrum}?). In this work we do not make improvements towards these bounds, but show that they are enough for the proof of circular law up to $\gamma>\frac{5}{6}$ and illustrate that we can conceptually guess out the threshold value $\gamma=\frac{1}{2}$. From this we believe that the bounds in Theorem \ref{tikhoroveheorem} are somewhat optimal for general random band matrices, and significant improvements may require very different techniques and assumptions on the matrix model.

\subsection{Convergence rate of Stieltjes transform}

We need two further results on a (polynomial in $n$) rate of convergence of the Stieltjes transform of random band matrix to that of the Gaussian ensemble. This is essentially derived in \cite{jain2021circular}, Section 4.2.

Consider the following empirical spectral distribution of $X_z:=X-zI$ and $G_z:=G-zI$, where $G$ is an $n\times n$ matrix with independent normalized real Gaussian distribution:
$$
\nu_{X_z}(\cdot)=\frac{1}{n}\sum_{i=1}^n \delta_{\sigma_i^2(X_z)}(\cdot),
$$

$$
\nu_{G_z}(\cdot)=\frac{1}{n}\sum_{i=1}^n \delta_{\sigma_i^2(G_z)}(\cdot).
$$

For two probability measures $\mu,\nu$ supported on $[0,\infty)$ we define their Kolmogorov distance via the following:
\begin{equation}
    \|\mu-\nu\|_{[0,\infty)}:=\sup_{x\geq 0} \left|\mu([0,,x])-\nu([0,,x])\right|.
\end{equation}

\subsection{A linearized model}

We also need a least singular value result similar to \cite{jain2021circular}, Theorem 2.1 for linearization of product matrices. This will be used to prove the product circular law for a growing number of products.

\begin{theorem}\label{theorem2.2whatsapp} Let $X_1,\cdots,X_{m_n}$ be independent $n\times n$ random matrices with i.i.d. entries having distribution $\frac{1}{\sqrt{n}}\xi$. Let $m_n$ be $n$-dependent. Assume that $\mathbb{E}[\xi]=0,\mathbb{E}[|\xi|^2]=1$ and $\xi$ has finite $p$-th moment for all $p\in\mathbb{N}_+$. Moreover, assume that $\xi$ is either real with distributional density bounded by $L>0$, or have independent real and complex parts and one of them has distributional density bounded by $L>0$.

Consider the following $(nm_n)\times (nm_n)$ matrix 
\begin{equation}\label{welinearizeit}
\mathcal{L}=\begin{pmatrix}
&X_2&&&\\&&X_3&&\\&&&\ddots&\\&&&&X_m\\X_1&&&&
\end{pmatrix}.\end{equation}
    Then for any fixed $K'>0$, for any $z\neq 0$ with $|z|\leq K'$ we have
    $$
\mathbb{P}(s_{min}(\mathcal{L}-zI_{nm_n})\leq \min(|z|,1/|z|)^{2m_n}n^{-25m_n})\leq \frac{C_\xi}{\sqrt{n}}
    $$  for constant $C_\xi>0$ depending only on $K'$ and the moments of $\xi$.

\end{theorem}

Theorem \ref{theorem2.2whatsapp} is proven in Section \ref{section5}. 
\begin{remark} It is not clear whether the proof of Theorem \ref{theorem2.2whatsapp} works for a discrete entry distribution $\xi$, and the proof of Theorem 2.1 of \cite{jain2021circular} does not immediately work here on the set of compressible vectors because the structure of $\mathcal{L}$ is slightly different. This is left as an open problem.
When $m_n$ is independent of $n$, the first statement on the least singular value of $\mathcal{L}$ was proven in a stronger form in 
\cite{MR4303893} (under finite fourth moment condition) and \cite{MR2861673} (for subGaussian entries) with a much better quantitative estimate. See also \cite{MR2861673} for an earlier lower bound which was polynomial in $n$ for finite $m_n$. 
\end{remark}

\section{Delocalization and rigidity estimates}\label{section33}

Throughout the section we assume that $X$ is the general inhomogeneous matrix model in Definition \ref{generalmodel}.

We will use the method of free probability and concentration inequalities from \cite{bandeira2023matrix} to derive estimates on Stieltjes transform. We first work with Gaussian entries. 
For any $n\times n$ matrix $A\in \mathcal{M}_{n}(\mathbb{C})$ with Gaussian entries, $A$ can be written in the form 

$$
A:=A_0+\sum_{i=1}^m A_ig_i
$$ where $g_1,\cdots,g_m$ are i.i.d. standard real Gaussian variables of mean 0, variance one and $A_0,A_1,\cdots,A_m$ are fixed matrices in $\mathcal{M}_{n}(\mathbb{C})$. To this model $A$ we can introduce a model in free probability 
$$
A_{\text{free}}:=A_0\otimes\mathbf{1}+\sum_{i=1}^m A_is_i,
$$ where $s_1,\cdots,s_m$
are a free semicircular family in some $C^*$ algebra $\mathcal{A}$ (see Section 4.1 of \cite{bandeira2023matrix} for more details).

We consider the dilation matrix of $X_z$.
Define for any $z\in\mathbb{C}$ the dilation matrix \begin{equation}\label{dilationmatrix}\mathcal{Y}_z:=\begin{pmatrix}0&X-zI_n\\X^*-\bar{z}I_n&0\end{pmatrix}.\end{equation} The Stieltjes transform of $\mathcal{Y}_z$ is defined as: for any $\eta\in\mathbb{C}_+:=\{z\in\mathbb{C}:\Im z>0\}$,
$$ \mathcal{G}_z(\eta):=(\mathcal{Y}_z-\eta I_{2n})^{-1},\quad m_z(\eta):=\operatorname{tr}\mathcal{G}_z(\eta),
$$ where $\operatorname{tr}$ denotes the normalized trace: for a $n\times n$ matrix $M$, $\operatorname{tr}M=\frac{1}{n}\operatorname{Tr}M$.

Our interest in the matrix $\mathcal{Y}_z$ stems from the fact that, a rigidity estimate on the eigenvalues of $\mathcal{Y}_z$ implies a rigidity estimate on singular values of $X_z$. We will prove a rigidity estimate for $\mathcal{Y}_z$ showing that not too many eigenvalues of $\mathcal{Y}_z$ are contained in a neighborhood of zero, which implies that except a certain amount of singular values, all other singular values of $X_z$ have a polynomial lower bound.

We also introduce the following \text{free} probability analogues of $\mathcal{Y}_z$ and $m_z(\eta)$. Define
$$\mathcal{Y}_{z,\text{free}}:=\begin{pmatrix}
    0&X_{\text{free}}-zI_n\\X_{\text{free}}^*-\bar{z}I_n&0
\end{pmatrix},
$$
$$
\mathcal{G}_{z,\text{free}}(\eta):=(\mathcal{Y}_{z,\text{free}}-\eta I_{2n})^{-1},\quad m_{z,\text{free}}(\eta):=\operatorname{tr}\mathcal{G}_{z,\text{free}}(\eta).
$$

Since the matrix $X$ has doubly stochastic variance profile, we show that $m_{z,\text{free}}(\eta)$ has a deterministic expression and is independent of the underlying graph chosen. These computations have already been done in \cite{han2024outliers}, Section 3.1 and we recall them here. 
By \cite{haagerup2005new}, equation 1.5 and \cite{han2024outliers}, equation 3.1, the  free probability Stieltjes transform $\mathcal{G}_{z,\text{free}}(\eta)$ solves the following self consistency equation
\begin{equation}\label{matrixdysomequationsfag}
\mathbb{E}[\mathcal{Y}_0\mathcal{G}_{z,\text{free}}(\eta)\mathcal{Y}_0]+G_{z,\text{free}}(\eta)^{-1}+\begin{pmatrix} \eta I_N\quad z I_N\\\bar{z}I_N\quad \eta I_N
\end{pmatrix}=0.\end{equation} This self consistency equation uniquely characterizes $\mathcal{G}_{z,\text{free}}(\eta)$ as by \cite{helton2007operator}, Theorem 2.1, for any $\eta\in\mathbb{C}_+$ there is a unique solution $\mathcal{G}_{z,\text{free}}(\eta)$ to this equation such that $\mathcal{G}_{z,\text{free}}(\eta)$ has a positive imaginary part. It can be checked as in \cite{han2024outliers} that the unique solution to \eqref{matrixdysomequationsfag} with positive imaginary part is given by 
\begin{equation}\label{definitiongzeta}
\mathcal{G}_{z,\text{free}}(\eta)=\begin{pmatrix}
    a(z,\eta) I_N& b(z,\eta)I_N\\ \bar{b}(z,\eta)I_N&c(z,\eta)I_N
\end{pmatrix},\quad m_{z,\text{free}}(\eta)=\frac{1}{2}(a(z,\eta)+c(z,\eta)),
\end{equation} where $a(z,\eta)$, $b(z,\eta)$ and $c(z,\eta)$ are scalar functions depending on $z$ and $\eta$. Moreover, $a$, $b$ and $c$ solve the scalar equations (see \cite{han2024outliers}, equation (3.4))

\begin{equation} \label{agageggwgewe}
\begin{pmatrix}
    c&0\\0&a
\end{pmatrix}
+
    \frac{1}{ac-|b|^2}\begin{pmatrix}
        c&-b\\-\bar{b}&a
    \end{pmatrix}+\begin{pmatrix}
        \eta&z\\\bar{z}&\eta
    \end{pmatrix}=0.
\end{equation}

From this computation we see that $\mathcal{G}_{z,\text{free}}(\eta)$, and hence $m_{z,\text{free}}(\eta)$, are independent of the specific variance profile for the inhomogeneous non-Hermitian matrix $X$, and in particular the Stieltjes transform $m_{z,\text{free}}(\eta)$ for $X_z$ coincides with the Stieltjes transform of $G_z$, a shifted square $n\times n$ matrix with $i.i.d.$ Gaussian entries.

The next task is to correctly identify the rate of convergence of the Stieltjes transform $m_{z}(\eta)$ towards the deterministic limit $m_{z,\text{free}}(\eta)$.

We first assume that $X$ has independent Gaussian entries, then the following theorem directly follows from applying \cite{bandeira2023matrix}, Theorem 2.8 and Corollary 4.14.
\begin{Proposition}\label{proposition3.1212}
    Assume that $X=(x_{ij})$ is a $n\times n$ random matrix. We assume that $X$ has independent Gaussian entries and a doubly stochastic variance profile $\mathbb{E}[XX^*]=\mathbb{E}[X^*X]=\mathbf{1}$. Let 
    $$b_n^{-1}:=\sup_{(i,j)\in[n]^2}\mathbb{E}[|x_{ij}|^2].$$
  Then with probability at least $1-n^{-100}$, we have
    \begin{equation}\label{convergencestieltjes}
\|\mathcal{G}_z(\eta)-\mathcal{G}_{z,\text{free}}(\eta)\|_{op}\leq \frac{C}{b_n|\Im\eta|^5}+\frac{C(\log n)^3}{\sqrt{b}_n|\Im\eta|^2}
    \end{equation} for some universal constant $C>0$.

In particular, for any $\mathbf{c}>0$ and for any $\Im\eta\geq (b_n)^{-1/5}n^\mathbf{c}$, we have with probability at least $1-n^{-100}$,
\begin{equation}\label{boundonoperatornorm}
\|\mathcal{G}_z(\eta)\|_{op}\leq C_1
\end{equation} where $C_1$ is another universal constant that can be chosen uniform for all $|z|\leq 3$ and all $|\eta|\leq 10$ (we will only consider $z$ and $C$ in this range).
\end{Proposition}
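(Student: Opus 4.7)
The plan is to realize $\mathcal{Y}_z$ as a Hermitian affine Gaussian model in the sense of \cite{bandeira2023matrix} and then feed its structural parameters into their matrix concentration / free probability comparison machinery. Since $\mathcal{Y}_z$ is already self-adjoint, one can write
\[
\mathcal{Y}_z \;=\; \mathcal{Y}_z^{(0)} \;+\; \sum_{i,j} b_{ij}\bigl( H_{ij}^{(1)} g_{ij}^{(1)} + H_{ij}^{(2)} g_{ij}^{(2)}\bigr),
\]
where $\mathcal{Y}_z^{(0)}$ absorbs the deterministic $z$-shift, each $H_{ij}^{(k)}$ is a fixed rank-two Hermitian matrix of unit operator norm (the Hermitization of a single off-diagonal entry of $X$), and the $g_{ij}^{(k)}$ are i.i.d.\ standard real Gaussians (the second family appearing only in the complex case). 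The free analogue $\mathcal{Y}_{z,\text{free}}$ is obtained by replacing the $g_{ij}^{(k)}$ with a free semicircular family, and its Cauchy transform is exactly the solution of the matrix Dyson equation \eqref{matrixdysomequationsfag} given explicitly by \eqref{definitiongzeta}--\eqref{agageggwgewe}.

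The core step is to compute the three structural parameters associated to this affine model: (i) the variance parameter $v(\mathcal{Y}_z)^2 = \|\mathrm{Cov}(\mathcal{Y}_z)\|_{op}$, which by the doubly stochastic identities $\mathbb{E}[XX^*]=\mathbb{E}[X^*X]=I_n$ is bounded by $1$; (ii) the weak covariance parameter $\sigma_*(\mathcal{Y}_z)$, controlled by $C\,\max_{i,j} b_{ij} \leq C b_n^{-1/2}$ because each coefficient $H_{ij}^{(k)}$ has rank two and the per-entry standard deviation is at most $b_n^{-1/2}$; and (iii) the summand-size parameter $R(\mathcal{Y}_z) = \max_{i,j,k}\|b_{ij}H_{ij}^{(k)}\|_{op}\leq b_n^{-1/2}$. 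All three bounds use only the doubly stochastic profile and the uniform upper bound $b_n^{-1}$ on individual entry variances.

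Plugging these into Corollary~4.14 of \cite{bandeira2023matrix}, which converts the operator-norm concentration of Theorem~2.8 into a quantitative bound on the resolvent against its free counterpart, yields (with probability at least $1-n^{-100}$) a bound of the schematic form
\[
\|\mathcal{G}_z(\eta) - \mathcal{G}_{z,\text{free}}(\eta)\|_{op} \;\lesssim\; \frac{R(\mathcal{Y}_z)^2}{|\Im\eta|^5} + \frac{\sigma_*(\mathcal{Y}_z)\,(\log n)^3}{|\Im\eta|^2},
\]
which upon substituting $R^2\lesssim b_n^{-1}$ and $\sigma_*\lesssim b_n^{-1/2}$ is precisely \eqref{convergencestieltjes}. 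The second statement \eqref{boundonoperatornorm} then follows by the triangle inequality from \eqref{convergencestieltjes} together with the uniform bound $\|\mathcal{G}_{z,\text{free}}(\eta)\|_{op} \leq C$ on the compact parameter set $|z|\leq 3,\ |\eta|\leq 10$ with $\Im\eta$ bounded below by the chosen scale, which is read off directly from the explicit scalar system \eqref{agageggwgewe}.

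There is no serious technical obstacle in the proof itself, since it amounts to a direct verification of the input hypotheses of an off-the-shelf black box. The conceptually delicate point --- which forces the rest of the paper to treat the smallest $\sim n/b_n$ singular values separately --- is that the power $|\Im\eta|^{-5}$ in the first term is not improvable by a single application of \cite{bandeira2023matrix} and cannot be iteratively bootstrapped here, because the matrix Dyson equation \eqref{matrixdysomequationsfag} lacks the non-vanishing spectral gap in its stability operator that \cite{MR3770875} exploited. Consequently the best scale accessible by this approach is obtained by balancing the two terms, which gives precisely $\Im\eta\sim b_n^{-1/5}$ and explains the $5$ in the eventual bandwidth exponent $5/6$.
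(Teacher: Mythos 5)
Your proposal takes exactly the same route as the paper, which proves Proposition \ref{proposition3.1212} simply by citing Theorem 2.8 and Corollary 4.14 of \cite{bandeira2023matrix} and then reading off the uniform bound on $\mathcal{G}_{z,\text{free}}$ from the scalar system \eqref{agageggwgewe}. What you have added is the verification that the structural parameters of the affine Gaussian model $\mathcal{Y}_z$ have the claimed sizes, which the paper leaves implicit. Those computations are substantively correct: the doubly stochastic profile controls the matrix variance parameter by $1$, and the pointwise bound $\max_{ij}b_{ij}^2\leq b_n^{-1}$ together with independence controls the weak variance and the covariance--operator--norm parameters by $b_n^{-1/2}$, yielding exactly \eqref{convergencestieltjes} after substitution. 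The closing observation about balancing $b_n^{-1}|\Im\eta|^{-5}$ against $b_n^{-1/2}|\Im\eta|^{-2}$ to get $\Im\eta\sim b_n^{-1/5}$, and the remark that one cannot bootstrap because the Dyson equation's stability operator lacks a spectral gap, both agree with the paper's discussion in Section \ref{mainideas}.

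One small but worth-flagging notational slip: you write $v(\mathcal{Y}_z)^2=\|\mathrm{Cov}(\mathcal{Y}_z)\|_{op}\leq 1$, but $\|\mathrm{Cov}(\mathcal{Y}_z)\|_{op}$ is of order $b_n^{-1}$, not $1$; the quantity bounded by $1$ via double stochasticity is the matrix variance parameter $\sigma(\mathcal{Y}_z)^2=\|\mathbb{E}[(\mathcal{Y}_z-\mathbb{E}\mathcal{Y}_z)^2]\|$. Likewise, the parameter you call $R(\mathcal{Y}_z)$ (maximal summand operator norm) is the one that plays a role in the \emph{non-Gaussian} universality bounds of \cite{brailovskaya2022universality}, not in the Gaussian machinery of \cite{bandeira2023matrix}, where the term of order $b_n^{-1}|\Im\eta|^{-5}$ arises from $v(\mathcal{Y}_z)^2\sigma(\mathcal{Y}_z)^2$. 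The numerical values you substitute ($b_n^{-1}$ and $b_n^{-1/2}$) are nevertheless the right ones, so the conclusion stands; I would just fix the labels so that the parameter bookkeeping matches the cited reference.
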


\begin{proof}
    The claim \eqref{convergencestieltjes} directly follows from \cite{bandeira2023matrix}, Theorem 2.8 and Corollary 4.14. The bound \eqref{boundonoperatornorm} is then a direct consequence of the uniform upper bound on $\mathcal{G}_{z,\text{free}}$ that can be read off from the solution of the cubic equation \eqref{agageggwgewe}, see for example \cite{girko2012theory}, \cite{MR3770875}.
\end{proof}

The upper bound on the Green function \eqref{boundonoperatornorm} immediately implies eigenvalue rigidity at the scale $\Im \eta\gg (b_n)^{-1/5}$.

\begin{corollary}\label{corollary3.45}
Let $X$ satisfy the assumptions in Proposition \ref{proposition3.1212}. Then for any $z\in\mathbb{C}$ and any $\mathbf{c}>0$, we can find universal constants $C_2$ such that with probability $1-n^{-10}$, for any interval $I\subset[-5,5]$ with length $$|I|\geq (b_n)^{-1/5}n^\mathbf{c},$$ we have
$$
\{\lambda\in I:\text{$\lambda$ is an eigenvalue of $\mathcal{Y}_z$} \}\leq C_2n|I|.
$$

\end{corollary}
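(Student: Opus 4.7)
The plan is to convert the uniform resolvent bound \eqref{boundonoperatornorm} into an eigenvalue count via the standard identity
\begin{equation*}
\operatorname{Im} \operatorname{Tr} \mathcal{G}_z(E+i\epsilon) = \sum_{j=1}^{2n}\frac{\epsilon}{(\lambda_j-E)^2+\epsilon^2},
\end{equation*}
where $\lambda_1,\dots,\lambda_{2n}$ denote the real eigenvalues of the Hermitian dilation $\mathcal{Y}_z$. First I set $\epsilon := (b_n)^{-1/5}n^{\mathbf{c}}$ and introduce a grid $\mathcal{N}\subset[-5,5]$ of points $E_1,\dots,E_N$ spaced by $\epsilon$, so $N\leq 20/\epsilon$ is at most polynomial in $n$. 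At each $E_k$ I apply Proposition \ref{proposition3.1212} to $\eta_k := E_k+i\epsilon$ (whose imaginary part meets the threshold of \eqref{boundonoperatornorm}), obtaining $\|\mathcal{G}_z(\eta_k)\|_{op}\leq C_1$ with probability at least $1-n^{-100}$. A union bound over the $N$ grid points then yields the event $\Omega$ on which $\|\mathcal{G}_z(\eta_k)\|_{op}\leq C_1$ holds simultaneously for every $E_k\in\mathcal{N}$, with $\mathbb{P}(\Omega)\geq 1-n^{-10}$.

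On $\Omega$, I bound the trace by $|\operatorname{Tr}\mathcal{G}_z(\eta_k)|\leq 2n\,\|\mathcal{G}_z(\eta_k)\|_{op}\leq 2C_1 n$, and observe that any eigenvalue $\lambda_j$ lying in $[E_k-\epsilon,E_k+\epsilon]$ contributes at least $\frac{1}{2\epsilon}$ to the identity above. This gives the local count
\begin{equation*}
\#\bigl\{\lambda_j\in[E_k-\epsilon,E_k+\epsilon]\bigr\} \leq 2\epsilon\cdot\operatorname{Im}\operatorname{Tr}\mathcal{G}_z(\eta_k)\leq 4C_1 n\epsilon
\end{equation*}
for every $E_k\in\mathcal{N}$. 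For an arbitrary interval $I\subset[-5,5]$ of length $|I|\geq\epsilon$, I cover $I$ by at most $\lceil 2|I|/\epsilon\rceil$ such windows centered at nearby grid points and add up the local counts to conclude $\#\{\lambda_j\in I\}\leq 8C_1 n|I|$, which is the desired bound with $C_2:=8C_1$.

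The argument is essentially routine once Proposition \ref{proposition3.1212} is available; the only mild points are (i) verifying that the grid size remains polynomial in $n$ so that the union bound preserves a useful probability, and (ii) absorbing the constants so that the threshold $(b_n)^{-1/5}n^{\mathbf{c}}$ for $\Im\eta$ matches the length scale at which eigenvalues are counted (achievable by shrinking $\mathbf{c}$ slightly, since the logarithmic factors in \eqref{convergencestieltjes} are harmless). There is no substantive obstacle here; the depth of the statement lies entirely in Proposition \ref{proposition3.1212}.
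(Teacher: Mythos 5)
Your proof is correct, but it follows a more elementary route than the paper's. The paper's one-line proof invokes the Helffer--Sj\"ostrand formula (citing \cite{benaych2016lectures}, Section 8) to pass from the resolvent bound to an eigenvalue count. You instead use the spectral identity
$$\operatorname{Im}\operatorname{Tr}\mathcal{G}_z(E+i\epsilon)=\sum_{j=1}^{2n}\frac{\epsilon}{(\lambda_j-E)^2+\epsilon^2},$$
combined with a grid of $O((b_n)^{1/5}n^{-\mathbf{c}})$ points, a union bound, and a covering argument. Both approaches are standard ways of converting a Green function upper bound at scale $\epsilon$ into a count of eigenvalues in windows of size $\epsilon$, and yours has the virtue of being entirely self-contained (it needs no extension of $m_{z,\text{free}}$ to the real line via almost-analytic extensions). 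The key structural points you identify are the right ones: the grid has polynomially many points so the union bound survives, each eigenvalue in a window of half-width $\epsilon$ contributes at least $1/(2\epsilon)$ to the identity, and $|\operatorname{Tr}\mathcal{G}_z|\leq 2nC_1$ on the good event. Two small remarks: the cover of a general interval $I$ of length at least $\epsilon$ by windows of width $2\epsilon$ at $\epsilon$-spaced centers costs a slightly larger constant than $\lceil 2|I|/\epsilon\rceil$ (something like $|I|/\epsilon+3$ windows), but this only changes the universal constant $C_2$ and is immaterial; and since both the Proposition and the Corollary quantify over the same arbitrary $\mathbf{c}>0$ with a $\mathbf{c}$-independent constant $C_1$, the matching of thresholds you worry about is automatic, no shrinking of $\mathbf{c}$ is needed.
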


\begin{proof}
    The proof follows from a standard argument via Helffer-Sjostrand formula, and a proof for Wigner matrices can be found in \cite{benaych2016lectures}, Section 8. We only need to make two adaptations: first, we replace Stieltjes transform of semicircle law to Stieltjes transform $m_{z,\text{free}}(\eta)$; and second, we do not take $\Im\eta\sim n^{-1+\mathbf{c}}$ but only to $\Im\eta\sim (b_n)^{-1/5}n^\mathbf{c}$. Finally the constant $C_2$ depends on a universal upper bound on $m_{z,\text{free}}(\eta)$ over $|z|\leq 3,|\eta|\leq 10$. The details are omitted
\end{proof}

The Green function estimate \eqref{boundonoperatornorm} also implies eigenvector delocalization via a standard argument.

\begin{proof}[\proofname\ of Theorem \ref{delocalizationtheorem}, Gaussian case] Via a continuity argument (finding grid points and using Lipschitz continuity of $\mathcal{G}_z$ in $z$ and $\eta$) we can show that with probability at least $1-n^{-10}$, we have the bound \eqref{boundonoperatornorm} holds uniformly over all $|z|\leq 3$ and $|\eta|\leq 5$. 

Consider any unit eigenvector $\psi\in\mathbb{C}^n$ of $X$ with eigenvalue $\lambda_\psi$: $(X-\lambda_\psi)\psi=0$. Then $(0,\psi)\in\mathbb{C}^{2n}$ is an eigenvector of $Y_{\lambda_\psi}$ with with eigenvalue $0$. By classical resolvent expansion (see for example \cite{benaych2016lectures}, Theorem 2.10), for an $n\times n$ symmetric matrix $M$ with unit eigenvectors $u_1,\cdots,u_n$ associated respectively to eigenvalues $\lambda_1,\cdots,\lambda_n$, denoting by $G(\eta
):=(M-\eta)^{-1}$ the Green function, we have for each $i,k=1,\cdots,n$,
$$
\Im G_{kk}(z)=\sum_{j=1}^n \frac{\Im\eta}{(\lambda_j-\lambda_i)^2+(\Im\eta)^2}|u_j(k)|^2\geq\frac{1}{\Im\eta}|u_i(k)|^2
$$ Taking $\Im\eta=(b_n)^{-1/5}n^\mathbf{2c}$ for some sufficiently small $\mathbf{c}>0$, this and \eqref{boundonoperatornorm} shows that $\|(0,\psi)\|_\infty\leq (b_n)^{-1/10}n^\mathbf{c}$ for each eigenvector $\psi$. Thus we have $\|\psi\|_\infty\leq (b_n)^{-1/10}n^\mathbf{c}$.

Finally, it is easy to show (see for example \cite{bandeira2023matrix},Theorem 2.1) that with probability $1-n^{-100}$ we have $\|X\|\leq 2.5$ (where for a matrix $X$ we use $\|X\|$ to denote its operator norm), so all eigenvalues of $X$ have modulus bounded by 3.
\end{proof}

Now we assume that $X$ has a general entry distribution. In this case we will use convergence of Stieltjes transform proved in a fairly general setting in \cite{brailovskaya2022universality}, and we need a truncation procedure to handle the tails of $\xi$. 

\begin{Proposition}\label{newproposition3.2}
    Assume that $X=(x_{ij})$ is a $n\times n$ random matrix with independent entries, having mean zero and a doubly stochastic variance profile $\mathbb{E}[XX^*]=\mathbb{E}[X^*X]=\mathbf{1}$. For each $(i,j)\in[n]^2$  $x_{ij}$ is distributed as $b_{ij}\xi$ where $\xi$ has mean zero, variance one, finite $p$-th moment for all $p\in\mathbb{N}_+$. Set $$b_n^{-1}=\sup_{(i,j)\in[n]^2}b_{ij}^2$$ and assume that $\log_n b_n>\epsilon>0$ for some $\epsilon>0$. Assume that for some (small) $\mathbf{c}'>0$,
    $$|\Im\eta|^8b_n\geq n^{\mathbf{c}'}.$$Then with probability at least $1-n^{-10}$, we have for some $\mathbf{c}>0$ depending only on $\mathbf{c}'$ that
    \begin{equation}\label{convergencestieltjesnon}
\|\mathcal{G}_z(\eta)-\mathcal{G}_{z,\text{free}}(\eta)\|_{op}\leq \frac{n^{\mathbf{c}/2}}{b_n|\Im\eta|^8}+n^{-10},
    \end{equation}
    and thus we have with probability at least $1-n^{-10}$ that
    \begin{equation}\label{estimate272}
\|\mathcal{G}_z(\eta)\|\leq C_1
    \end{equation} for a universal constant $C_1>0$ that can be chosen uniformly for $|z|\leq 3$ and $|\eta|\leq 5$.
\end{Proposition}
\begin{proof} 
We take a truncation argument. Let $\hat{\xi}:=\xi1_{|\xi|\leq\sqrt{b_n}n^{-\mathbf{c}}}$ for some sufficiently small $\mathbf{c}>0$. Let $\hat{X}$ be the random matrix constructed just as $X$ but with atom distribution $\xi$ changed to $\hat{\xi}-\mathbb{E}[\hat{\xi}]$. Suppose that $\mathbb{E}[\hat{\xi}]=0$, then from the moment assumption on $\xi$, we check that with probability at least $1-n^{-50}$ we have $X=\hat{X}$ and $\mathcal{G}_z(\eta)=\widehat{\mathcal{G}}_z(\eta),$ the latter being the Stieltjes transform of the dilation \eqref{dilationmatrix} of $\widehat{X}$. Let $\widehat{\mathcal{G}}_{z,\text{free}}(\eta)$ denote the Stieltjes transform defined analogously as ${\mathcal{G}}_{z,\text{free}}(\eta)$ where we replace $X_{\text{free}}$ by $\hat{X}_{\text{free}}$: $\widehat{\mathcal{G}}_{z,\text{free}}(\eta)$ can be computed similarly via \eqref{matrixdysomequationsfag} and the only difference here is that the entries of $\hat{X}$ has mean zero and a slightly smaller variance $\mathbb{E}[|\hat{\xi}|^2]=1-o(1)$.

Then we first apply \cite{brailovskaya2022universality}, Theorem 2.10 to the difference $\mathbb{E}[\|\widehat{\mathcal{G}}_z(\eta)-\widehat{\mathcal{G}}_{z,\text{free}}(\eta)\|]$ and then apply the resolvent concentration inequality in \cite{brailovskaya2022universality}, Proposition 5.6 [as explained in \cite{han2025circular1}, Section 4, we can adapt \cite{brailovskaya2022universality},  Proposition 5.6 to be a concentration inequality for each individual entry] to bound the fluctuation of $\widehat{\mathcal{G}}_z(\eta)$ around its mean. Analyzing the error terms implies we should take $|\Im\eta|^8 b_n\geq n^{\mathbf{c}'} $ for some $\mathbf{c}'>0.$ Finally we take the uniform upper bound of $\widehat{G}_{z,\text{free}}(\eta)$ for $|z|\leq 3$ and $|\eta|\leq 5$.

In general when  $\mathbb{E}[\hat{\xi}]\neq 0$, by moment assumptions on $\xi$ we have that $\mathbb{E}[|\hat{\xi}|]\leq n^{-1000}$, and thus by the resolvent formula $(A-z)^{-1}-(B-z)^{-1}=(A-z)^{-1}(B-A)(B-z)^{-1}$, we can show the same estimate for $\widehat{\mathcal{G}}_z(\eta)$ is also the limit for $\mathcal{G}_z(\eta)$.
\end{proof}

Similarly to the Gaussian case, we have an immediate corollary with exactly the same proof that will be omitted:

\begin{corollary}\label{corollary3.45news}
Let $X$ satisfy the assumptions in Proposition \ref{newproposition3.2}. Then for any $z\in\mathbb{C}$ and any $\mathbf{c}>0$, we can find universal constants $C_2$ such that with probability $1-n^{-10}$, for any interval $I\subset[-5,5]$ with length $$|I|\geq (b_n)^{-1/8}n^\mathbf{c},$$ we have
$$
\{\lambda\in I:\text{$\lambda$ is an eigenvalue of $\mathcal{Y}_z$} \}\leq C_2n|I|.
$$

\end{corollary}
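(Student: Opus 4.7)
The plan is to follow verbatim the Gaussian argument for Corollary \ref{corollary3.45}, substituting the general-entry Green function bound \eqref{estimate272} from Proposition \ref{newproposition3.2} in place of \eqref{boundonoperatornorm}. This substitution weakens the admissible scale of $\Im\eta$ from $(b_n)^{-1/5}n^{\mathbf{c}}$ to $(b_n)^{-1/8}n^{\mathbf{c}}$, which is precisely the threshold appearing in the statement.

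First I would promote the pointwise bound \eqref{estimate272} to a uniform-in-$E$ bound on a single mesoscopic scale $\eta_0:=\tfrac{1}{2}(b_n)^{-1/8}n^{\mathbf{c}}$. Fix an $n^{-20}$-net $\mathcal{N}\subset[-5,5]$, apply \eqref{estimate272} at each $E+i\eta_0$ with $E\in\mathcal{N}$ (taking $\mathbf{c}'=8\mathbf{c}$ in the hypothesis $|\Im\eta|^8 b_n\geq n^{\mathbf{c}'}$), and take a union bound to conclude that $\|\mathcal{G}_z(E+i\eta_0)\|_{op}\leq C_1$ at all net points, with probability at least $1-n^{-9}$. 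The elementary resolvent Lipschitz bound $\|\mathcal{G}_z(E+i\eta_0)-\mathcal{G}_z(E'+i\eta_0)\|_{op}\leq |E-E'|/\eta_0^2$ then upgrades this to $\|\mathcal{G}_z(E+i\eta_0)\|_{op}\leq C_1+1$ uniformly over $E\in[-5,5]$.

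Second I would invoke the standard Stieltjes-transform eigenvalue-counting inequality: denoting by $\{\lambda_j\}$ the eigenvalues of $\mathcal{Y}_z$, the identity
$$
\Im\operatorname{Tr}\mathcal{G}_z(E+i\eta_0)=\sum_j\frac{\eta_0}{(\lambda_j-E)^2+\eta_0^2}\geq\frac{1}{2\eta_0}\#\{j:|\lambda_j-E|\leq\eta_0\},
$$
combined with $\Im\operatorname{Tr}\mathcal{G}_z(E+i\eta_0)\leq 2n(C_1+1)$, produces $\#\{j:|\lambda_j-E|\leq\eta_0\}\leq 4n(C_1+1)\eta_0$ for every $E\in[-5,5]$ on the same event. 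For an arbitrary interval $I\subset[-5,5]$ with $|I|\geq 2\eta_0$, covering $I$ by at most $\lceil|I|/\eta_0\rceil$ intervals of the form $[E-\eta_0,E+\eta_0]$ and summing yields $\#\{j:\lambda_j\in I\}\leq C_2 n|I|$ with $C_2=4(C_1+1)$.

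No serious obstacle is expected: all the analytic work has been packaged into Proposition \ref{newproposition3.2}, and at the coarse scale $(b_n)^{-1/8}n^{\mathbf{c}}$ both the net argument and the counting inequality are routine (see for instance Section 8 of \cite{benaych2016lectures}). The only point to beware of is that one must choose $\mathbf{c}'$ in the application of Proposition \ref{newproposition3.2} strictly smaller than what $\eta_0$ allows, so that the pointwise bound $\|\mathcal{G}_z(E+i\eta_0)\|_{op}\leq C_1$ actually holds at each net point; this is arranged by the relation $\mathbf{c}'=8\mathbf{c}$ above.
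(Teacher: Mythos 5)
Your proof is correct, and its core mechanism---a mesoscopic-scale Green function bound from Proposition \ref{newproposition3.2} plus a net argument plus the elementary counting inequality $\Im\operatorname{Tr}\mathcal{G}_z(E+i\eta_0)\geq \tfrac{1}{2\eta_0}\#\{j:|\lambda_j-E|\leq\eta_0\}$---is the standard route. The paper omits the proof and points back to Corollary \ref{corollary3.45}, which in turn cites a ``standard argument via Helffer--Sjostrand formula''; your version is marginally more direct, since for a one-sided counting bound one does not actually need the Helffer--Sjostrand machinery (which compares $\mu_{\mathcal{Y}_z}$ to $\mu_{z,\text{free}}$ via the convergence rate and the bounded density of the free limit), only the operator norm bound \eqref{estimate272}. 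Both are textbook and interchangeable here.

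One small slip: with $\eta_0=\tfrac12(b_n)^{-1/8}n^{\mathbf{c}}$ one has $\eta_0^8 b_n=\tfrac{1}{256}n^{8\mathbf{c}}$, so taking $\mathbf{c}'=8\mathbf{c}$ does \emph{not} satisfy the hypothesis $|\Im\eta|^8 b_n\geq n^{\mathbf{c}'}$; you need $\mathbf{c}'<8\mathbf{c}$ strictly (e.g.\ $\mathbf{c}'=7\mathbf{c}$), which is exactly the point you flag in your last paragraph but then contradict by writing $\mathbf{c}'=8\mathbf{c}$. This is cosmetic and does not affect the argument.
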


This also implies eigenvector delocalization bounds:

\begin{proof}[\proofname\ of Theorem \ref{delocalizationtheorem}, non-Gaussian case] The proof is the same as the Gaussian case, using Proposition \ref{newproposition3.2} instead of Proposition \ref{proposition3.1212}. 
\end{proof}

\subsection{The Stieltjes transform convergence}
We now use the already derived Green function convergence estimates in Proposition \ref{proposition3.1212} and \ref{newproposition3.2} to obtain the following polynomial convergence rate in Kolmogorov distance:

\begin{theorem}\label{theorem5.11}
    Let $X$ be defined as in Definition \ref{generalmodel} but with $b_n\geq n^\epsilon$ for some small $\epsilon>0$, and with $\xi$ having mean 0, variance 1 and finite $p$-th moment for all $p$. Then for any fixed $z\in\mathbb{C}$ we can find $\zeta>0$ such that with probability $1-o(1)$, we have
\begin{equation}\label{convergencesincigyag}
    \|\nu_{X_z}(\cdot)-\nu_{G_z}(\cdot)\|_{[0,\infty)}=O(n^{-\zeta}).
\end{equation}
\end{theorem}

\begin{proof}[\proofname\ of Theorem \ref{theorem5.11}]
    Let $\widetilde{G}_z$ denote the dilation matrix 
    $$
\widetilde{G}_z:=\begin{pmatrix}0&G-zI\\G^*-\bar{z}I&0\end{pmatrix}. 
    $$ Observing that $\lambda$ is an eigenvalue of $X_zX_z^*$ (resp. $G_zG_z^*$) if and only if $\pm\sqrt{\lambda}$ are both eigenvalues of $\mathcal{Y}_z$ (resp. $\widetilde{G}_z$), we see that to prove \eqref{convergencesincigyag}, it suffices to prove
    \begin{equation}\label{convergencereduced}
\|\mu_{\mathcal{Y}_z}(\cdot)-\mu_{\widetilde{G}_z}(\cdot)\|_{(-\infty,\infty)}=O(n^{-\zeta}),
    \end{equation} where we recall that $\mu_X$ denotes the empirical eigenvalue distribution: it differs from $\nu_X$ by considering the eigenvalues instead of considering squared singular values, but this change is compatible with definition of the Kolmogorov distance up to a universal constant. In other words, a bound on $\|\mu_{\mathcal{Y}_z}(\cdot)-\mu_{\widetilde{G}_z}(\cdot)\|_{(-\infty,\infty)}$ immediately implies a bound on $\|\nu_{\mathcal{Y}_z}(\cdot)-\nu_{\widetilde{G}_z}(\cdot)\|_{[0,\infty)}$, which by definition equals $ \|\nu_{X_z}(\cdot)-\nu_{G_z}(\cdot)\|_{[0,\infty)}$.

    The benefit of this reduction is that both $\mathcal{Y}_z$ and $\widetilde{G}_z$ have independent entries modulo symmetry. To prove \eqref{convergencereduced}, we show the Stieltjes transform of $\mathcal{Y}_z$ converges to the Stieltjes transform of $\widetilde{G}_z$ at a polynomial rate. Recall that $m_z(\eta):=\operatorname{tr}\mathcal{G}_z(\eta)$ is the Stieltjes transform of $\mathcal{Y}_z$, and $m_{z,\text{free}}(\eta)$ is  the Stieltjes transform of $\mathcal{Y}_{z,\text{free}}$. Here the expression $m_{z,\text{free}}(\eta)$ has been explicitly computed in \eqref{definitiongzeta}, \eqref{agageggwgewe}.

    (The Gaussian case) First assume that $X$ has Gaussian entries.
    If we let $g_{z,\text{free}}(\eta)$ denote the Stieltjes transform of $\widetilde{G}_{z,\text{free}}$ then we have $m_{z,\text{free}}(\eta)=g_{z,\text{free}}(\eta)$
as they both satisfy the defining relations \eqref{definitiongzeta}, \eqref{agageggwgewe}. Hence in the following we use $\mu_z(\cdot)$ to denote both $\mu_{\mathcal{Y}_z,\text{free}}(\cdot)$ and $\mu_{\widetilde{G}_z,\text{free}}(\cdot)$. By \cite{MR2663633}, Lemma 3.1 and Remark 3.1, $\mu_z(\cdot)$ has a bounded support and bounded density: for each $x\in\mathbb{R},y\geq 0:$
\begin{equation}\label{boundeddensity}
|\mu_z((-\infty,x+y))-\mu_z((-\infty,x))|\leq y.\end{equation}

    Using Theorem 2.8 and Corollary 4.14 of \cite{bandeira2023matrix}, we deduce that for any $z\in\mathbb{C}$ and any $\eta$ with $\Im\eta>0$, with probability $1-n^{-100}$ we have
    \begin{equation}\label{convergencerate12}
|m_z(\eta)-m_{z,\text{free}}(\eta)|\leq\frac{(\log n)^5}{b_n|\Im \eta|^5}.
    \end{equation}
Fix an arbitrarily large $A>0$ and 
Denote by $\mathcal{D}_A$ the region 
$$
\mathcal{D}_A:=\{z\in\mathbb{C}:-A\leq\Re z\leq A,(b_n)^{-1/6}\leq \Im z\leq 1\}.
$$
Using the Lipschitz continuity of $m_z(\eta)$ and $g_z(\eta)$ in $\eta$, we can upgrade the convergence in \eqref{convergencerate12} to be uniform over $\eta\in\mathcal{D}_A$, with probability at least $1-n^{-10}$. 

In the following we take $K>0$ sufficiently large such that $\mu_{\mathcal{Y}_z}(\mathbb{R}\setminus[-K,K])=0$ with probability $1-o(1)$, and $\mu_{\mathcal{Y}_{z,\text{free}}}(\mathbb{R}\setminus[-K,K])=0$. Also take some $a>0$ sufficiently large.
Now we use \cite{MR2567175}, Corollary B.15 to derive, for some $A>0$ large depending on $K$ and $a$, (writing $\xi=\theta+i\tau$)
$$\begin{aligned}
&\|\mu_{\mathcal{Y}_z}(\cdot)-\mu_z(\cdot)\|_{(-\infty,\infty)}\\&\leq C\left[
\int_{-A}^A|m_{z}(\xi)-m_{z,\text{free}}(\xi)|d\theta+\frac{1}{\tau}\sup_x\int_{|y|\leq 2\tau a}|\mu_{z}((-\infty,x+y])-\mu_{z}((-\infty,x])||dy
\right]
\end{aligned}$$ for some $C$ depending only on $A$, $a$ and $K$. Now we set $\tau=(b_n)^{-1/6}$. To bound the first term on the second line we use \eqref{convergencerate12}, and to bound the second term on the second line use \eqref{boundeddensity}. Doing the same computation for $\widetilde{G}_z$ completes the proof of \eqref{convergencereduced}.

(The non-Gaussian case) For a non-Gaussian X, we can associate it with a Gaussian model having the same mean and variance, and thus $m_{z,\text{free}}(\eta)$, the associated free probability model, is uniquely determined. Then we use estimate \eqref{convergencestieltjesnon} in place of \eqref{convergencerate12} to complete the proof, which leads to a different value of $\zeta>0$. 
\end{proof}

\section{The circular law proof}
In this section we collect the aforementioned results on least singular value and Stieltjes transform convergence rate, to deduce convergence of the log potential and justify the circular law. This is the technique used by \cite{jain2021circular}, and earlier versions of such argument can be found in \cite{silverstein1995empirical}, \cite{MR2567175} and \cite{tao2008random}.

To complete the proof of circular law, we shall use the replacement principle by Tao and Vu \cite{ WOS:000281425000010}.

\begin{theorem}\label{replacements}
    Assume that for each $n$, $X$ and $G$ are two ensembles of size $n\times n$ random matrices satisfying
    \begin{enumerate}
        \item The quantity $$\frac{1}{n}\|G\|_{\text{HS}}^2+\frac{1}{n}\|X\|_{\text{HS}}^2$$ is bounded in probability (resp, almost surely), where $\|\cdot\|_{\text{HS}}$ denotes the Hilbert-Schmidt norm of a square matrix.
        \item For a.e. $z\in\mathbb{C}$ with respect to Lebesgue measure, we have
$$
\frac{1}{n}\log|\det(X-zI_n)|  -\frac{1}{n}\log|\det(G-zI_n)|
$$ converges to 0 in probability (resp, almost surely). 
    \end{enumerate} Then $\mu_X-\mu_G$ converges in probability (resp. almost surely) to zero.
\end{theorem}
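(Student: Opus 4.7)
The plan is to use Girko's Hermitization identity to recast the weak convergence $\mu_X - \mu_G \to 0$ against smooth test functions as a statement about logarithmic potentials, and then combine the pointwise hypothesis (2) with a uniform $L^1$ envelope coming from hypothesis (1) to pass the limit through the integral. By a standard density argument (weak convergence of bounded random measures against $C_c(\mathbb{C})$ is characterized by a countable family of smooth test functions), it suffices to show that for every $f \in C_c^2(\mathbb{C})$,
\[
\int f\, d\mu_X - \int f\, d\mu_G \;\longrightarrow\; 0
\]
in probability (resp.\ a.s.). Since $\tfrac{1}{2\pi}\log|\cdot|$ is the fundamental solution of the Laplacian on $\mathbb{C}$, integration by parts yields Girko's identity
\[
\int f\, d\mu_A \;=\; \frac{1}{2\pi n}\int_{\mathbb{C}} \Delta f(z)\, \log\bigl|\det(A - zI_n)\bigr|\, d^2 z.
\]
Writing $\Phi_A(z) := \tfrac{1}{n}\log|\det(A - zI_n)|$ and $K := \operatorname{supp}(\Delta f)$, the problem reduces to proving that $\int_K \Delta f(z)\bigl[\Phi_X(z) - \Phi_G(z)\bigr]\, d^2z \to 0$, while hypothesis (2) already supplies pointwise convergence $\Phi_X(z) - \Phi_G(z) \to 0$ at Lebesgue-a.e.\ $z$.

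The positive part of $\Phi_A(z)$ is handled by AM--GM on the singular values,
\[
\Phi_A(z) \;\le\; \tfrac{1}{2}\log\Bigl(\tfrac{1}{n}\|A - zI_n\|_{\mathrm{HS}}^2\Bigr)\;\le\; \tfrac{1}{2}\log\Bigl(\tfrac{2}{n}\|A\|_{\mathrm{HS}}^2 + 2|z|^2\Bigr),
\]
which hypothesis (1) bounds uniformly in $z \in K$ on events of probability $\ge 1 - \epsilon$ for any $\epsilon > 0$. For the negative part of $\Phi_A$, coming from small singular values of $A - zI_n$, I would run a Fubini argument exploiting that $\log|\cdot|$ is locally integrable on $\mathbb{C}$:
\[
\int_K |\Phi_A(z)|\, d^2 z \;\le\; \frac{1}{n}\sum_{i=1}^n \int_K \bigl|\log|z - \lambda_i(A)|\bigr|\, d^2 z \;\le\; C_K \cdot \frac{1}{n}\sum_{i=1}^n \log\!\bigl(2 + |\lambda_i(A)|\bigr),
\]
and Schur's inequality $\sum_i|\lambda_i(A)|^2 \le \|A\|_{\mathrm{HS}}^2$ together with Jensen bounds the right-hand side by $C_K\log(2 + \|A\|_{\mathrm{HS}}/\sqrt{n})$, which is again tight in probability by (1). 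This furnishes an in-probability $L^1(K)$ envelope for each of $\Phi_X$ and $\Phi_G$, independent of $n$. Combined with the pointwise convergence $\Phi_X - \Phi_G \to 0$ from (2), a truncation/Vitali-type argument on the high-probability events above then exchanges the limit with the integral and delivers the desired convergence.

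The main obstacle is the negative part of the logarithmic potential: small singular values of $A - zI_n$ create arbitrarily large negative contributions to $\Phi_A(z)$ that cannot be controlled pointwise in $z$. The Fubini step circumvents this by integrating $z$ out first, converting the singularities of $\log|\cdot|$ into a finite constant $C_K$ and reducing everything to a mild eigenvalue norm bound. The interplay between the hypotheses is essential: hypothesis (1) alone provides an in-probability $L^1$ envelope for each individual potential, while hypothesis (2) supplies the pointwise cancellation --- only together do they yield the integrated convergence, which is the conceptual content of the replacement principle.
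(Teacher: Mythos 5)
The paper does not prove this theorem; it is quoted verbatim from Tao and Vu \cite{WOS:000281425000010}, so your attempt is effectively being compared against the original Tao--Vu argument. Your sketch captures the right architecture and is close in spirit to theirs: Girko hermitization reduces the ESD comparison to an integrated comparison of log-potentials, hypothesis~(1) controls the positive part via AM--GM/Schur and provides tightness (so $C_c^2$ test functions suffice), and the Fubini step together with local integrability of $\log|\cdot|$ tames the negative part.

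The one place that needs more care than your sketch admits is the final ``truncation/Vitali-type argument.'' An $L^1(K)$ bound in probability on $\Phi_X-\Phi_G$, combined with pointwise-in-$z$ convergence to $0$ in probability, does \emph{not} by itself give $\int_K \Delta f\,(\Phi_X-\Phi_G)\to 0$ in probability: one can cook up random functions with bounded $L^1$ norm, vanishing in probability at every fixed $z$, whose integral does not go to zero. What rescues the argument is the stronger \emph{uniform-integrability-on-small-sets} control that the Fubini bound actually delivers. Concretely, the same computation gives, for any measurable $E\subset K$,
\[
\int_E |\Phi_A(z)|\,d^2z \;\le\; \frac{1}{n}\sum_i \int_E \bigl|\log|z-\lambda_i|\bigr|\,d^2z
\;\le\; \omega(|E|) \;+\; |E|\cdot C_K\,\frac{\|A\|_{\mathrm{HS}}^2}{nR} \;+\; |E|\cdot C_K\log(2+R),
\]
where the first term is the uniform (in $\lambda$) small-set integral of $\log$, $\omega(\delta)\to 0$ as $\delta\to 0$, and the split at $|\lambda_i|\le R$ vs.\ $>R$ is controlled by Schur on the high-probability event from (1). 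With this you run the argument cleanly: for the target tolerance $\epsilon'$, pick $M$ so that $\{\|X\|^2_{\mathrm{HS}}/n+\|G\|^2_{\mathrm{HS}}/n\le M\}$ has probability $\ge 1-\epsilon/3$; pick $\delta$ so the small-set integral is $\le \epsilon'/(3\|\Delta f\|_\infty)$ on that event; set $\tau=\epsilon'/(3\|\Delta f\|_\infty|K|)$; by hypothesis (2) and dominated convergence $\mathbb{E}\,|\{z\in K:|\Phi_X-\Phi_G|>\tau\}|\to 0$, so with probability $\ge 1-\epsilon/3$ the exceptional set has measure $<\delta$; splitting the integral on this set and its complement finishes. (For the almost-sure variant, Fubini--Tonelli converts ``for a.e.\ $z$, a.s.\ convergence'' into ``a.s., convergence for a.e.\ $z$,'' after which a pathwise Vitali argument applies.) So your proof is essentially correct and recovers the Tao--Vu replacement principle; you should just be explicit that it is the small-set form of the Fubini bound --- not the bare $L^1$ envelope --- that licenses the limit interchange.
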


Also recall that for an $n\times n$ matrix $M$, denote by $\sigma_1(M)\geq\sigma_2(M)\cdots\geq\sigma_n(M)$ its singular values, then 
$$
|\det M|=\prod_{i=1}^n\sigma_i(M).
$$
We will also need a useful technical lemma from \cite{jain2021circular}:

\begin{lemma}\label{lemma4.2345}
    Consider two probability measures $\mu,\nu$ on $\mathbb{R}$ and $0<a<b$. Then 
    $$
\left|\int_a^b\log(x)d\mu(x)-\int_a^b\log(x)d\nu(x)\right|\leq 2(|\log b|+|\log a|)\|\mu-\nu\|_{[a,b]},
$$ in which we define 
$$\|\mu-\nu\|_{[a,b]}:=\sup_{x\in[a,b]}|\mu(a,x)-\nu(a,x)|.$$
\end{lemma}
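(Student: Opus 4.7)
The statement is a classical integration-by-parts bound comparing $\log$-integrals against two probability measures via their Kolmogorov distance on $[a,b]$. My plan is to reduce everything to integration by parts in the Riemann--Stieltjes sense.

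First, set $D := \|\mu-\nu\|_{[a,b]} = \sup_{x\in[a,b]} |\mu(a,x) - \nu(a,x)|$, and introduce the cumulative distribution functions $F_\mu(x) := \mu((a,x])$, $F_\nu(x) := \nu((a,x])$ restricted to $[a,b]$. Let $h(x) := F_\mu(x) - F_\nu(x)$; note that $h(a) = 0$ and $|h(x)| \leq D$ for all $x \in [a,b]$ (up to the distinction between open and half-open intervals, which only affects the atom at $x$ and can be absorbed by standard right-/left-continuous regularizations).

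Next, I apply Riemann--Stieltjes integration by parts to the signed measure $\mu - \nu$ on $(a,b]$, together with the contribution from any atoms at $a$:
\begin{equation*}
\int_a^b \log(x)\, d(\mu-\nu)(x) = \log(b)\, h(b) - \int_a^b \frac{h(x)}{x}\, dx + \log(a)\,\bigl(\mu(\{a\}) - \nu(\{a\})\bigr),
\end{equation*}
where the last term accounts for the potential atoms of $\mu,\nu$ at $x=a$. The atom difference $|\mu(\{a\}) - \nu(\{a\})|$ is controlled by $D$ (via a one-sided limit $\mu(a,a+\epsilon) \to \mu(\{a\})$ as $\epsilon \to 0^+$, using that the Kolmogorov distance on $[a,b]$ already accounts for right-limits at $a$; a more careful version can use closed intervals uniformly if desired, but the form of the bound is unchanged).

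Now I take absolute values and bound each term by $D$:
\begin{equation*}
\left|\int_a^b \log(x)\, d(\mu-\nu)(x)\right| \leq |\log b|\, D + D\int_a^b \frac{1}{x}\, dx + |\log a|\, D = \bigl(|\log a| + |\log b| + |\log(b/a)|\bigr) D.
\end{equation*}
Finally, I verify case-by-case that $|\log(b/a)| = |\log b - \log a| \leq |\log a| + |\log b|$ (by the triangle inequality on $\mathbb{R}$), giving the overall bound $2(|\log a| + |\log b|) D$, as claimed.

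The proof is essentially mechanical, so no step is truly hard. The only care needed is in the boundary-atom accounting: one has to pick a convention for $F_\mu, F_\nu$ (right-continuous or left-continuous) that is consistent with the definition $\mu(a,x)$ appearing in $\|\mu-\nu\|_{[a,b]}$, so that the integration-by-parts boundary terms are controlled by exactly the Kolmogorov norm and not by something slightly larger. The factor of $2$ in the final inequality is exactly the slack that absorbs these boundary subtleties, and also absorbs $|\log(b/a)|$ via the triangle inequality.
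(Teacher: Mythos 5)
The paper does not prove this lemma itself (it cites it from \cite{jain2021circular}), but your integration-by-parts argument is exactly the standard derivation of such bounds. Writing $h(x) = F_\mu(x) - F_\nu(x)$ with $h(a)=0$ and $|h|\le D$ on $[a,b]$, the identity $\int_a^b \log x\, dh = \log(b)h(b) - \int_a^b h(x)/x\,dx$ plus $\int_a^b dx/x = \log(b/a)$ and the triangle inequality $|\log(b/a)|\le|\log a|+|\log b|$ gives the stated bound, and your arithmetic checks out. One small imprecision: the parenthetical claim ``$\mu(a,a+\epsilon)\to\mu(\{a\})$ as $\epsilon\to 0^+$'' holds only if $\mu(a,x)$ is read as $\mu([a,x))$; with the open-interval reading $\mu((a,x))$ the limit is $0$, so the atom at $a$ is not directly controlled by $D$. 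This is harmless here --- with $h(x)=\mu((a,x])-\nu((a,x])$ one gets $h(a)=0$ with no separate boundary term, and even the slightly weaker bound $(2|\log b|+|\log a|)D$ one obtains in that normalization is still $\le 2(|\log a|+|\log b|)D$ --- but the hedging sentence about ``the one-sided limit'' should be either dropped or made consistent with a single fixed convention for the endpoints.
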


Now we complete the proof of circular law in Theorem \ref{circulargeneral} and \ref{theorem1.616}.

\begin{proof}[\proofname of Theorem \ref{circulargeneral}]
    First consider the Gaussian case. Take $\mathbf{c}>0$ to be a sufficiently small constant to be fixed later. For any fixed $z\in\mathbb{C},z\neq 0$, we truncate the sum
    $$
\frac{1}{n}\sum_{i=1}^n\log\sigma_i(X_z)=\frac{1}{n}\sum_{i=1}^n\log\sigma_i(X_z)1_{\{\sigma_i(X_z)\geq (b_n)^{-1/5}n^\mathbf{c}\}}+\frac{1}{n}\sum_{i=1}^n\log\sigma_i(X_z) 1_{\{\sigma_i(X_z)\leq (b_n)^{-1/5}n^\mathbf{c}\}}.
    $$ By Corollary \ref{corollary3.45}, with probability $1-n^{-100}$ there are at most $C_2 n(b_n)^{-1/5}n^\mathbf{c}$ terms in the second summation since $z\neq 0$.

    Then by Theorem \ref{tikhoroveheorem}, with probability $1-o(1)$ we have $|\log\sigma_i(X_z)|\leq \frac{n^{1+3\kappa}}{b_n}\log b_n$ for each $i\in[n]$ and any small $\kappa>0$. (We take the fact that $\|X\|\leq 3$ with high probability so that $\sigma_1(X_z)$ is bounded). Thus the second sum vanishes in the limit by our assumption $\log b_n\geq\frac{5}{6}+\epsilon$ and upon taking $\mathbf{c}>0$ and $\kappa>0$ small:
\begin{equation}\label{sumoversigmai}
\left|\frac{1}{n}\sum_{i=1}^n\log\sigma_i(X_z) 1_{\{\sigma_i(X_z)\leq (b_n)^{-1/5}n^\mathbf{c}\}}\right|\leq C_2\frac{1}{n} n^\mathbf{c}\frac{n}{(b_n)^{1/5}}\frac{n^{1+3\kappa}}{b_n} =o(1),\quad n\to\infty.
\end{equation} The same estimate in \eqref{sumoversigmai} is true if we replace $X_z$ by $G_z$ in the sum, with a much simpler proof.

Now we consider the first sum difference
\begin{equation}\label{sumsumsumsumsum}\begin{aligned}&\left|
\frac{1}{n}\sum_{i=1}^n\log\sigma_i(X_z)1_{\{\sigma_i(X_z)\geq (b_n)^{-1/5}n^\mathbf{c}\}}-\frac{1}{n}\sum_{i=1}^n\log\sigma_i(G_z)1_{\{\sigma_i(G_z)\geq (b_n)^{-1/5}n^\mathbf{c}\}}\right|\\&\leq 2(K_1+K_2\log n)\|\nu_{X_z}-\nu_{G_z}\|_{[0,\infty)}=o(1),\quad n\to\infty,
\end{aligned}\end{equation} where we first use Lemma \ref{lemma4.2345} and then use Theorem \ref{theorem5.11}, and $K_1,K_2$ are two universal constants. We also used $\|X_z\|\leq 3+|z|$ with high probability.
To sum up we have shown for any $z\in\mathbb{C}$, with probability $1-o(1)$ the second criterion of Theorem \ref{replacements} holds. The first criterion is much easier to check. Since $G$ is an i.i.d. Gaussian matrix, $\mu_G$ converges to the circular law, so $\mu_X$ also converges to circular law.

For the non-Gaussian step, we follow the same procedure but with Corollary \ref{corollary3.45news}
in place of Corollary \ref{corollary3.45}. The only difference is now we bound, using Corollary \ref{corollary3.45news}:
\begin{equation}\label{sumoversigmailow}
\left|\frac{1}{n}\sum_{i=1}^n\log\sigma_i(X_z) 1_{\{\sigma_i(X_z)\leq (b_n)^{-1/8}n^\mathbf{c}\}}\right|\leq C_2\frac{1}{n} n^\mathbf{c}\frac{n}{(b_n)^{1/8}}\frac{n^{1+3\kappa}}{b_n}=o(1),\quad n\to\infty,
\end{equation} and the quantity is vanishing to zero thanks to the assumption $\log b_n\geq \frac{8}{9}+\epsilon$ when $\kappa>0$ is taken very small.
\end{proof}

\begin{proof}[\proofname of Theorem \ref{theorem1.616}] We first show that proving the circular law for $\mathcal{M}$ is equivalent to proving the circular law for $\mathcal{L}$ as defined in \eqref{welinearizeit}: this is a standard linear algebra exercise. Indeed, if $\lambda$ is an eigenvalue of $\mathcal{M}$, then $\lambda^{m_n}$ is an eigenvalue of $X_1\cdots X_{m_n}$ and vice versa. Meanwhile, by \cite{MR4076784}, Proposition 4.1, eigenvalues of  $X_1\cdots X_{m_n}$ and eigenvalues of $\mathcal{L}^{m_n}$ are equal with multiplicity $m_n$. 

Thus it suffices to prove circular law for $\mathcal{L}$. For this we follow exactly the proof of Theorem \ref{circulargeneral} in the non-Gaussian case where we use Theorem \ref{theorem2.2whatsapp} for the model $\mathcal{L}$. We require that $n\geq (nm_n)^{\frac{8}{9}+\epsilon}$, so that $m_n\leq n^{\frac{1}{8}-\epsilon'}$ for some $\epsilon'>0$. The details are omitted.
\end{proof}

\section{Least singular value for linearized model}\label{section5} In this section, we prove Theorem \ref{theorem2.2whatsapp}. 

For a sufficiently large $K>0$, denote by $\mathcal{E}_K$ the following event 
$$
\mathcal{E}_K:=\{\forall i\in[m_n],\|X_i\|\leq K,s_{min}(X_i)\geq n^{-5},|z|\leq K\},
$$where $s_{min}(X_i)$ is the least singular value of $X_i$. Then by the assumptions on $\xi$, we have, as in \cite{jain2021circular}, Lemma 2.4, that $\mathcal{E}_K$ holds with high probability:
\begin{lemma}
    There exists $K>0$ depending only on $\xi,z$ such that $\mathbb{P}(\mathcal{E}_K)\geq 1-Kn^{-1}.$
\end{lemma}
    For $\alpha,\beta\in(0,1)$ we define the following $z$-dependent subset of good vectors: 
    $$
L_{\alpha,\beta}:=\{v\in\mathbb{S}^{nm_n-1}:|\{i\in[nm_n]:|v_i|\geq \beta|\hat{z}|^{m_n} n^{-10m_n}(nm_n)^{-1/2}\}|\geq \alpha nm_n\},
$$ where we denote by $\hat{z}:=\min(|z|,1/|z|)$ throughout this section. The exponent $|z|^{m_n}$ may look weird but it shows up as our matrix solves an inductive sequence with multiplications.

Denote by $\mathcal{L}_z=\mathcal{L}-zI_{nm_n}$, then we can decompose the singular value event as follows: since $s_{min}(\mathcal{L}_z)=s_{min}(\mathcal{L}_z^*)$, we switch rows and columns and write
$$\begin{aligned}&
\mathbb{P}(\mathcal{E}_K\cap\{s_{min}(\mathcal{L}_z)\leq t|z|^{m_n}n^{-10m_n}(nm_n)^{-1/2}\})\\&\leq \mathbb{P}(\mathcal{E}_K\cap\{\inf_{v\in L_{\alpha,\beta}}\|\mathcal{L}_z^*v\|\leq t|z|^{m_n}n^{-10m_n}(nm_n)^{-1/2}\})\\&+\mathbb{P}(\mathcal{E}_K\cap\{\inf_{v\in L_{\alpha,\beta}^c}\|\mathcal{L}_z^*v\|\leq t|z|^{m_n}n^{-10m_n}(nm_n)^{-1/2}\}),
\end{aligned}$$where $L_{\alpha,\beta}^c$ is the complement of $L_{\alpha,\beta}$ in $\mathbb{S}^{nm_n-1}$.
Then as in the proof of \cite{jain2021circular}, Lemma 2.5, we have the following reduction to a distance problem for vectors in $L_{\alpha,\beta}$:
\begin{lemma}
Let $x_i-ze_i$ denote the $i$-th row of $\mathcal{L}_z$, and $\mathcal{H}_i$ denote the span of all rows except the $i$-th, for all $i\in[nm_n]$. Then 
$$\begin{aligned}
&\mathbb{P}(\mathcal{E}_K\cap\{\inf_{v\in L_{\alpha,\beta}}\|\mathcal{L}_z^*v\|\leq t|z|^{m_n}n^{-10m_n}(nm_n)^{-1/2}\})\\&\leq \frac{1}{\alpha nm_n}\sum_{k=1}^{nm_n}\mathbb{P}(\mathcal{E}_K\cap\{\operatorname{dist}(x_k-ze_k,\mathcal{H}_k)\leq\beta^{-1}t\}).
\end{aligned}$$
\end{lemma}

For any $v\in\mathbb{R}^{nm_n}$ of unit norm, we write its components as $v=(v_{[1]},v_{[2]},\cdots,v_{[m_n]})$ where each component $v_{[i]}\in\mathbb{R}^n$.
Then we need the following result on the structure of normal vectors: 
\begin{Proposition}\label{proposition5.32} For any $z\neq 0$,
on the event $\mathcal{E}_K$, for any vector $v\in\mathbb{S}^{nm_n-1}$ that is orthogonal to $\mathcal{H}_1$, whenever $n$ is sufficiently large (depending only on $|z|$ and $K$), we have that $$
\|v_{[i]}\|\geq |\hat{z}|^{m_n}n^{-10m_n}(nm_n)^{-1/2},\quad\forall i\in[m_n].
$$
\end{Proposition}

This structural characterization is similar to \cite{jain2021circular}, Proposition 2.6 but here we crucially use the fact that $z\neq 0$ and we have only one random block per row for $\mathcal{L}_z$.
\begin{proof} The vector
    $v$ must satisfy that $zv_{[j]}+X_{j+1} v_{[j+1]}=0$ for each $2\leq j\leq m_n$. Since $v$ is a unit vector, we must be able to find some $j_0\in[m_n]$ such that $\|v_{[j_0]}\|\geq (m_n)^{-1/2}$. If $j_0\geq 2$, using the relation $zv_{[j_0]}+X_{j_0+1}v_{[j_0+1]}=0$ we deduce that $\|v_{[{j_0+1]}}\|\geq |z|(m_n)^{-1/2}n^{-5}$ on the event $\mathcal{E}_K$. Apply this relation iteratively to $j_0+2,j_0+3,\cdots$ and finally back to $\|v_{[1]}\|$ leads to the claimed bound that 
    $$\|v_{[i]}\|\geq |z|^{m_n}n^{-10m_n}(m_n)^{-1/2}\quad \forall j_0\leq i\leq m_n\text{ and also for } i=1,$$ since we iterate for at most $m_n$ times in this linear system, and we cover $i=1$ by the circular update rule. Meanwhile, we consider $zv_{[j_0-1]}+X_{j_0}v_{[j_0]}=0$ and deduce that $\|v_{[j_0-1]}\|\geq |z|^{-1}(m_n)^{-1/2}n^{-5}$. Iterating this for $j_0-1,j_0-2,\cdots,2$ verifies that $$\|v_{[i]}\|\geq |\frac{1}{z}|^{m_n}n^{-10m_n}(m_n)^{-1/2},\quad \forall 2\leq i\leq j_0.$$  For $j_0=1$ the proof is similar.
\end{proof}

Then we rule out $L_{\alpha,\beta}^c$ from being close to the kernel of $\mathcal{L}_z$ when $\alpha,
\beta$ are small.

\begin{Definition}For any $\alpha_0,\kappa_0\in(0,1)$ and any integer $k\in\mathbb{N}$, we let $\operatorname{sparse}_k(a_0)$ denote the set of unit vectors on $\mathbb{S}^{k-1}$ supported on at most $a_0k$ coordinates, and $\operatorname{Comp}(a_0,\kappa_0)$ denote the set of unit vectors with distance at most $\kappa_0$ from an $a_0$-sparse vector. Then denote by $\operatorname{Incomp}_k(a_0,\kappa_0)$ the complement of 
$\operatorname{Comp}_k(a_0,\kappa_0)$ on $\mathbb{S}^{k-1}$.
\end{Definition}

\begin{lemma}\label{lemma662}
    Let $M_i$ denote the $n\times 2n$ matrix given by $[zI_n,X_{i+1}]$,  then we can find some $\gamma\in(0,1)$ and some $a_0,\kappa_0\in(0,1)$ depending only on the entry law such that 
    $$
\mathbb{P}(\mathcal{E}_K\cap\{\inf_{w\in\operatorname{Comp}_{2n}(a_0,\kappa_0)}\|M_iw\|\leq\gamma\})\leq\exp(-\gamma n).
    $$
\end{lemma}
\begin{proof}
    This argument is standard by now. We decompose $w=(w_1,w_2)$ into its two components, then discretize the unit sphere $\mathbb{S}^{2n-1}$, using row-wise anti-concentration of $X_{i+1}w_2$ to complete the proof, see for instance \cite{jain2021circular}, Lemma 2.9.
\end{proof}

\begin{corollary}\label{corollary672}
    With the given value of  $\gamma$, we can find $\alpha,\beta\in(0,1)$ such that
    $$
\mathbb{P}(\mathcal{E}_K\cap\{\inf_{v\in L_{\alpha,\beta}^c}:\|\mathcal{L}_z^*v\|\leq \gamma|\hat{z}|^{m_n} n^{-10m_n}(nm_n)^{-1/2}\})\leq m_n\exp(-\gamma n).
    $$
\end{corollary}

\begin{proof}
On the event $\mathcal{E}_K$, by Proposition \ref{proposition5.32}, for each $i$ we have $$\|(v_{[i]},v_{[i+1]})\|\geq |\hat{z}|^{m_n}n^{-10m_n}(nm_n)^{-1/2},$$ then we combine this into Lemma \ref{lemma662}. To obtain a suitable value of $\alpha,\beta$, we use \cite{rudelson2008littlewood}, Lemma 3.4, which states that an incompressible vector is flat.
\end{proof}

Now we can complete the proof of Theorem \ref{theorem2.2whatsapp}.

\begin{proof}[\proofname\ of Theorem \ref{theorem2.2whatsapp}] We work on the event $\mathcal{E}_K$. Then by Corollary \ref{corollary672}, it suffices to consider $\inf_{v\in L_{\alpha,\beta}}\|\mathcal{L}_z^*v\|$.  Then we have, with ${x_1}_{[2]}$ an $n$-dimensional vector of i.i.d. entries having the law $n^{-1/2}\xi$,
$$\begin{aligned}&
\mathbb{P}(\mathcal{E}_K\cap\{\operatorname{dist}(x_1-ze_1,\mathcal{H}_1)\}\leq \beta^{-1}t)\leq \sup_w\mathbb{P}(|\langle v_{[2]},{x_1}_{[2]}\rangle-w|\leq\beta^{-1}t)\\&\leq tn^{1/2}\|v_{[2]}\|^{-1}\leq tn^{1/2}|\hat{z}|^{-m_n}n^{10m_n}(nm_n)^{1/2},
\end{aligned} $$   where the second inequality follows from the density assumption on $\xi$ and Theorem 1.1 of \cite{rudelson2015small}. This completes the proof.
\end{proof}

\printbibliography

@article{ WOS:000281425000010,
Author = {Tao, Terence and Vu, Van and Krishnapur, Manjunath},
Title = {Random matrices: universality of ESDs and the circular law},
Journal = {Annals of Probability},
Year = {2010},
Volume = {38},
Number = {5},
Pages = {2023-2065},
}

@article{haagerup2005new,
author = {Haagerup, Uffe and Thorbjørnsen, Steen},
year = {2003},
month = {01},
pages = {},
title = {A new application of Random Matrices: $Ext(C*_{red}(F_2))$ is not a group},
volume = {162},
journal = {Annals of Mathematics}
}

@article{tikhomirov2023pseudospectrum,
  title={On pseudospectrum of inhomogeneous non-Hermitian random matrices},
  author={Tikhomirov, Konstantin},
  journal={arXiv preprint arXiv:2307.08211},
  year={2023}
}

@article{jain2021circular,
  title={Circular law for random block band matrices with genuinely sublinear bandwidth},
  author={Jain, Vishesh and Jana, Indrajit and Luh, Kyle and O’Rourke, Sean},
  journal={Journal of Mathematical Physics},
  volume={62},
  number={8},
  year={2021},
  publisher={AIP Publishing}
}

@article {MR4303893,
    AUTHOR = {Chaudhuri, Rohit and Jain, Vishesh and Pillai, Natesh S.},
     TITLE = {Universality and least singular values of random matrix
              products: a simplified approach},
   JOURNAL = {Bernoulli},
  FJOURNAL = {Bernoulli. Official Journal of the Bernoulli Society for
              Mathematical Statistics and Probability},
    VOLUME = {27},
      YEAR = {2021},
    NUMBER = {4},
     PAGES = {2519--2531},
      ISSN = {1350-7265,1573-9759},
   MRCLASS = {60B20 (15B52)},
  MRNUMBER = {4303893},
MRREVIEWER = {Florent\ Benaych-Georges},
}

@article{bandeira2023matrix,
  title={Matrix concentration inequalities and free probability},
  author={Bandeira, Afonso S and Boedihardjo, March T and van Handel, Ramon},
  journal={Inventiones mathematicae},
  volume={234},
  number={1},
  pages={419--487},
  year={2023},
  publisher={Springer}
}

@article{girkoarticle,
author = {Girko, V.},
year = {2004},
month = {09},
pages = {49-104},
title = {The Strong Circular Law. Twenty years later. Part I},
volume = {12},
journal = {Random Operators and Stochastic Equations}}

@article {MR4076784,
    AUTHOR = {Coston, Natalie and O'Rourke, Sean and Wood, Philip Matchett},
     TITLE = {Outliers in the spectrum for products of independent random
              matrices},
   JOURNAL = {Ann. Inst. Henri Poincar\'e{} Probab. Stat.},
  FJOURNAL = {Annales de l'Institut Henri Poincar\'e{} Probabilit\'es et
              Statistiques},
    VOLUME = {56},
      YEAR = {2020},
    NUMBER = {2},
     PAGES = {1284--1320},
      ISSN = {0246-0203,1778-7017},
   MRCLASS = {60B20},
  MRNUMBER = {4076784},
MRREVIEWER = {Brad\ Rodgers},
}

@article{helton2007operator,
  title={Operator-valued semicircular elements: solving a quadratic matrix equation with positivity constraints},
  author={Helton, J William and Far, Reza Rashidi and Speicher, Roland},
  journal={International Mathematics Research Notices},
  volume={2007},
  number={9},
  pages={rnm086--rnm086},
  year={2007},
  publisher={OUP}
}

@article{tao2008random,
  title={Random matrices: the circular law},
  author={Tao, Terence and Vu, Van},
  journal={Communications in Contemporary Mathematics},
  volume={10},
  number={02},
  pages={261--307},
  year={2008},
  publisher={World Scientific}
}

@book {MR2567175,
    AUTHOR = {Bai, Zhidong and Silverstein, Jack W.},
     TITLE = {Spectral analysis of large dimensional random matrices},
    SERIES = {Springer Series in Statistics},
   EDITION = {Second},
 PUBLISHER = {Springer, New York},
      YEAR = {2010},
     PAGES = {xvi+551},
      ISBN = {978-1-4419-0660-1},
   MRCLASS = {60B20 (15B52 62H99 91G70 94A05)},
  MRNUMBER = {2567175},
MRREVIEWER = {Wenbo\ V.\ Li},
}

@article {MR3945840,
    AUTHOR = {Rudelson, Mark and Tikhomirov, Konstantin},
     TITLE = {The sparse circular law under minimal assumptions},
   JOURNAL = {Geom. Funct. Anal.},
  FJOURNAL = {Geometric and Functional Analysis},
    VOLUME = {29},
      YEAR = {2019},
    NUMBER = {2},
     PAGES = {561--637},
      ISSN = {1016-443X,1420-8970},
   MRCLASS = {60B20 (15A18)},
  MRNUMBER = {3945840},
MRREVIEWER = {Vladislav\ Kargin},
}

@article {MR1428519,
    AUTHOR = {Bai, Z. D.},
     TITLE = {Circular law},
   JOURNAL = {Ann. Probab.},
  FJOURNAL = {The Annals of Probability},
    VOLUME = {25},
      YEAR = {1997},
    NUMBER = {1},
     PAGES = {494--529},
      ISSN = {0091-1798,2168-894X},
   MRCLASS = {60F15 (62H99)},
  MRNUMBER = {1428519},
}

@article{rudelson2015small,
  title={Small ball probabilities for linear images of high-dimensional distributions},
  author={Rudelson, Mark and Vershynin, Roman},
  journal={International Mathematics Research Notices},
  volume={2015},
  number={19},
  pages={9594--9617},
  year={2015},
  publisher={Oxford University Press}
}

@article{rudelson2008littlewood,
  title={The Littlewood--Offord problem and invertibility of random matrices},
  author={Rudelson, Mark and Vershynin, Roman},
  journal={Advances in Mathematics},
  volume={218},
  number={2},
  pages={600--633},
  year={2008},
  publisher={Elsevier}
}

@article {MR2861673,
    AUTHOR = {O'Rourke, Sean and Soshnikov, Alexander},
     TITLE = {Products of independent non-{H}ermitian random matrices},
   JOURNAL = {Electron. J. Probab.},
  FJOURNAL = {Electronic Journal of Probability},
    VOLUME = {16},
      YEAR = {2011},
     PAGES = {no. 81, 2219--2245},
      ISSN = {1083-6489},
   MRCLASS = {60B20},
  MRNUMBER = {2861673},
MRREVIEWER = {Mattia\ Cafasso},
}

@article {MR2908617,
    AUTHOR = {Bordenave, Charles and Chafa\"i, Djalil},
     TITLE = {Around the circular law},
   JOURNAL = {Probab. Surv.},
  FJOURNAL = {Probability Surveys},
    VOLUME = {9},
      YEAR = {2012},
     PAGES = {1--89},
      ISSN = {1549-5787},
   MRCLASS = {60B20 (15B52 60F15)},
  MRNUMBER = {2908617},
MRREVIEWER = {Vladislav\ Kargin},
}

@article {MR3405592,
    AUTHOR = {Rudelson, Mark and Vershynin, Roman},
     TITLE = {Delocalization of eigenvectors of random matrices with
              independent entries},
   JOURNAL = {Duke Math. J.},
  FJOURNAL = {Duke Mathematical Journal},
    VOLUME = {164},
      YEAR = {2015},
    NUMBER = {13},
     PAGES = {2507--2538},
      ISSN = {0012-7094,1547-7398},
   MRCLASS = {60B20 (15B52)},
  MRNUMBER = {3405592},
MRREVIEWER = {Julio\ Andrade},
}

@article{han2024outliers,
  title={Outliers and bounded rank perturbation for non-Hermitian random band matrices},
  author={Han, Yi},
  journal={arXiv preprint arXiv:2408.00567},
  year={2024}
}

@article{silverstein1995empirical,
  title={On the empirical distribution of eigenvalues of a class of large dimensional random matrices},
  author={Silverstein, Jack W and Bai, Zhi Dong},
  journal={Journal of Multivariate analysis},
  volume={54},
  number={2},
  pages={175--192},
  year={1995},
  publisher={Elsevier}
}

@incollection {MR4680362,
    AUTHOR = {Tikhomirov, Konstantin},
     TITLE = {Quantitative invertibility of non-{H}ermitian random matrices},
 BOOKTITLE = {I{CM}---{I}nternational {C}ongress of {M}athematicians. {V}ol.
              4. {S}ections 5--8},
     PAGES = {3292--3313},
 PUBLISHER = {EMS Press, Berlin},
      YEAR = {2023},
     
}

@article {MR3878135,
    AUTHOR = {Cook, Nicholas and Hachem, Walid and Najim, Jamal and Renfrew,
              David},
     TITLE = {Non-{H}ermitian random matrices with a variance profile ({I}):
              deterministic equivalents and limiting {ESD}s},
   JOURNAL = {Electron. J. Probab.},
  FJOURNAL = {Electronic Journal of Probability},
    VOLUME = {23},
      YEAR = {2018},
     PAGES = {Paper No. 110, 61},
      ISSN = {1083-6489},
   MRCLASS = {60B20 (15A18 15B52)},
  MRNUMBER = {3878135},
MRREVIEWER = {Sandrine\ Dallaporta},
}

@article {MR4195739,
    AUTHOR = {Litvak, Alexander E. and Lytova, Anna and Tikhomirov,
              Konstantin and Tomczak-Jaegermann, Nicole and Youssef, Pierre},
     TITLE = {Circular law for sparse random regular digraphs},
   JOURNAL = {J. Eur. Math. Soc. (JEMS)},
  FJOURNAL = {Journal of the European Mathematical Society (JEMS)},
    VOLUME = {23},
      YEAR = {2021},
    NUMBER = {2},
     PAGES = {467--501},
      ISSN = {1435-9855,1435-9863},
   MRCLASS = {60B20 (05C20 05C50 15B52 47B80)},
  MRNUMBER = {4195739}
}

@article {MR3980923,
    AUTHOR = {Basak, Anirban and Rudelson, Mark},
     TITLE = {The circular law for sparse non-{H}ermitian matrices},
   JOURNAL = {Ann. Probab.},
  FJOURNAL = {The Annals of Probability},
    VOLUME = {47},
      YEAR = {2019},
    NUMBER = {4},
     PAGES = {2359--2416},
      ISSN = {0091-1798,2168-894X},
   MRCLASS = {60B20 (15B52 60B10)},
  MRNUMBER = {3980923},
MRREVIEWER = {Khanh\ Duy\ Trinh},
}

@article{sah2023limiting,
  title={The limiting spectral law for sparse iid matrices},
  author={Sah, Ashwin and Sahasrabudhe, Julian and Sawhney, Mehtaab},
  journal={arXiv preprint arXiv:2310.17635},
  year={2023}
}

@article {MR4388923,
    AUTHOR = {Alt, Johannes and Kr\"uger, Torben},
     TITLE = {Local elliptic law},
   JOURNAL = {Bernoulli},
  FJOURNAL = {Bernoulli. Official Journal of the Bernoulli Society for
              Mathematical Statistics and Probability},
    VOLUME = {28},
      YEAR = {2022},
    NUMBER = {2},
     PAGES = {886--909},
      ISSN = {1350-7265,1573-9759},
   MRCLASS = {60B20 (15B52)},
  MRNUMBER = {4388923},
MRREVIEWER = {Stepan\ Mazur},
}

@article {MR4268303,
    AUTHOR = {Hanin, Boris and Paouris, Grigoris},
     TITLE = {Non-asymptotic results for singular values of {G}aussian
              matrix products},
   JOURNAL = {Geom. Funct. Anal.},
  FJOURNAL = {Geometric and Functional Analysis},
    VOLUME = {31},
      YEAR = {2021},
    NUMBER = {2},
     PAGES = {268--324},
      ISSN = {1016-443X,1420-8970},
   MRCLASS = {60B20 (15B52)},
  MRNUMBER = {4268303},
}

@article {MR3857860,
    AUTHOR = {Cook, Nicholas},
     TITLE = {Lower bounds for the smallest singular value of structured
              random matrices},
   JOURNAL = {Ann. Probab.},
  FJOURNAL = {The Annals of Probability},
    VOLUME = {46},
      YEAR = {2018},
    NUMBER = {6},
     PAGES = {3442--3500},
      ISSN = {0091-1798,2168-894X},
   MRCLASS = {60B20 (15B52)},
  MRNUMBER = {3857860},
MRREVIEWER = {Oleksiy\ Khorunzhiy},
}

@article {MR4580535,
    AUTHOR = {Liu, Dang-Zheng and Wang, Dong and Wang, Yanhui},
     TITLE = {Lyapunov exponent, universality and phase transition for
              products of random matrices},
   JOURNAL = {Comm. Math. Phys.},
  FJOURNAL = {Communications in Mathematical Physics},
    VOLUME = {399},
      YEAR = {2023},
    NUMBER = {3},
     PAGES = {1811--1855},
      ISSN = {0010-3616,1432-0916},
   MRCLASS = {60B20 (37D25)},
  MRNUMBER = {4580535},
}

@article {MR4401507,
    AUTHOR = {Gorin, Vadim and Sun, Yi},
     TITLE = {Gaussian fluctuations for products of random matrices},
   JOURNAL = {Amer. J. Math.},
  FJOURNAL = {American Journal of Mathematics},
    VOLUME = {144},
      YEAR = {2022},
    NUMBER = {2},
     PAGES = {287--393},
      ISSN = {0002-9327,1080-6377},
   MRCLASS = {60B20 (15B52 33E20)},
  MRNUMBER = {4401507},
}

@article {MR4421171,
    AUTHOR = {Ahn, Andrew},
     TITLE = {Fluctuations of {$\beta$}-{J}acobi product processes},
   JOURNAL = {Probab. Theory Related Fields},
  FJOURNAL = {Probability Theory and Related Fields},
    VOLUME = {183},
      YEAR = {2022},
    NUMBER = {1-2},
     PAGES = {57--123},
      ISSN = {0178-8051,1432-2064},
   MRCLASS = {60B20 (15B52 33D52)},
  MRNUMBER = {4421171},
}

@article {MR3915294,
    AUTHOR = {Peled, Ron and Schenker, Jeffrey and Shamis, Mira and Sodin,
              Sasha},
     TITLE = {On the {W}egner orbital model},
   JOURNAL = {Int. Math. Res. Not. IMRN},
  FJOURNAL = {International Mathematics Research Notices. IMRN},
      YEAR = {2019},
    NUMBER = {4},
     PAGES = {1030--1058},
      ISSN = {1073-7928,1687-0247},
   MRCLASS = {81Q12 (47B80 60B20)},
  MRNUMBER = {3915294},
}

@article {MR3770875,
    AUTHOR = {Alt, Johannes and Erd\H os, L\'aszl\'o{} and Kr\"uger, Torben},
     TITLE = {Local inhomogeneous circular law},
   JOURNAL = {Ann. Appl. Probab.},
  FJOURNAL = {The Annals of Applied Probability},
    VOLUME = {28},
      YEAR = {2018},
    NUMBER = {1},
     PAGES = {148--203},
      ISSN = {1050-5164,2168-8737},
   MRCLASS = {60B20 (15B52)},
  MRNUMBER = {3770875},
MRREVIEWER = {Vladislav\ Kargin},
}

@book{girko2012theory,
  title={Theory of stochastic canonical equations: Volumes i and ii},
  author={Girko, Viacheslav Leonidovich},
  volume={535},
  year={2012},
  publisher={Springer Science \& Business Media}
}

@article{benaych2016lectures,
  title={Lectures on the local semicircle law for Wigner matrices},
  author={Benaych-Georges, Florent and Knowles, Antti},
  journal={arXiv preprint arXiv:1601.04055},
  year={2016}
}

@article {MR3068390,
    AUTHOR = {Erd\H os, L\'aszl\'o{} and Knowles, Antti and Yau, Horng-Tzer
              and Yin, Jun},
     TITLE = {The local semicircle law for a general class of random
              matrices},
   JOURNAL = {Electron. J. Probab.},
  FJOURNAL = {Electronic Journal of Probability},
    VOLUME = {18},
      YEAR = {2013},
     PAGES = {no. 59, 58},
      ISSN = {1083-6489},
   MRCLASS = {60B20},
  MRNUMBER = {3068390},
}

@article {MR2525652,
    AUTHOR = {Schenker, Jeffrey},
     TITLE = {Eigenvector localization for random band matrices with power
              law band width},
   JOURNAL = {Comm. Math. Phys.},
  FJOURNAL = {Communications in Mathematical Physics},
    VOLUME = {290},
      YEAR = {2009},
    NUMBER = {3},
     PAGES = {1065--1097},
      ISSN = {0010-3616,1432-0916},
   MRCLASS = {60B20 (15A18 15B52 82B44)},
  MRNUMBER = {2525652},
MRREVIEWER = {J.\ A.\ van Casteren},
}

@article {MR2663633,
    AUTHOR = {G\"otze, Friedrich and Tikhomirov, Alexander},
     TITLE = {The circular law for random matrices},
   JOURNAL = {Ann. Probab.},
  FJOURNAL = {The Annals of Probability},
    VOLUME = {38},
      YEAR = {2010},
    NUMBER = {4},
     PAGES = {1444--1491},
      ISSN = {0091-1798,2168-894X},
   MRCLASS = {60B20},
  MRNUMBER = {2663633},
MRREVIEWER = {Sasha\ Sodin},
}

@article{brailovskaya2022universality,
  title={Universality and sharp matrix concentration inequalities},
  author={Brailovskaya, Tatiana and van Handel, Ramon},
  journal={Geometric and Functional Analysis},
  pages={1--105},
  year={2024},
  publisher={Springer}
}

@article{wigner1958distribution,
  title={On the distribution of the roots of certain symmetric matrices},
  author={Wigner, Eugene P},
  journal={Annals of Mathematics},
  volume={67},
  number={2},
  pages={325--327},
  year={1958},
  publisher={JSTOR}
}

@article {MR3622892,
    AUTHOR = {Nemish, Yuriy},
     TITLE = {Local law for the product of independent non-{H}ermitian
              random matrices with independent entries},
   JOURNAL = {Electron. J. Probab.},
  FJOURNAL = {Electronic Journal of Probability},
    VOLUME = {22},
      YEAR = {2017},
     PAGES = {Paper No. 22, 35},
      ISSN = {1083-6489},
   MRCLASS = {60B20},
  MRNUMBER = {3622892},
MRREVIEWER = {Khanh\ Duy\ Trinh},
}

@article {MR4112718,
    AUTHOR = {Kopel, Phil and O'Rourke, Sean and Vu, Van},
     TITLE = {Random matrix products: universality and least singular
              values},
   JOURNAL = {Ann. Probab.},
  FJOURNAL = {The Annals of Probability},
    VOLUME = {48},
      YEAR = {2020},
    NUMBER = {3},
     PAGES = {1372--1410},
      ISSN = {0091-1798,2168-894X},
   MRCLASS = {60B20 (15A18 15B52)},
  MRNUMBER = {4112718},
MRREVIEWER = {Ramon\ van Handel},
}

@article{han2025circular1,
  title={The circular law for non-Hermitian random band matrices up to bandwidth $ N^{1/2+ c}$ },
  author={Han, Yi},
  journal={arXiv preprint arXiv:2508.18143},
  year={2025}
}

@article{han2025circular2,
  title={Circular law for non-Hermitian block band matrices with slowly growing bandwidth},
  author={Han, Yi},
  journal={arXiv preprint arXiv:2511.01744},
  year={2025}
}

@article{yau2025delocalization,
  title={Delocalization of One-Dimensional Random Band Matrices},
  author={Yau, Horng-Tzer and Yin, Jun},
  journal={arXiv preprint arXiv:2501.01718},
  year={2025}
}

@article{shcherbina2025characteristic,
  title={Characteristic polynomials of non-Hermitian random band matrices},
  author={Shcherbina, Mariya and Shcherbina, Tatyana},
  journal={arXiv preprint arXiv:2510.04255},
  year={2025}
}

@article{drogin2025localization,
  title={Localization of One-Dimensional Random Band Matrices},
  author={Drogin, Reuben},
  journal={arXiv preprint arXiv:2508.05802},
  year={2025}
}

@article{dubova2025delocalization,
  title={Delocalization of two-dimensional random band matrices},
  author={Dubova, Sofiia and Yang, Kevin and Yau, Horng-Tzer and Yin, Jun},
  journal={arXiv preprint arXiv:2503.07606},
  year={2025}}

@article{dubova2025delocalization3d,
  title={Delocalization of Non-Mean-Field Random Matrices in Dimensions $ d\geq3$},
  author={Dubova, Sofiia and Yang, Fan and Yau, Horng-Tzer and Yin, Jun},
  journal={arXiv preprint arXiv:2507.20274},
  year={2025}
}

@article{erdHos2025zigzag,
  title={The Zigzag Strategy for Random Band Matrices},
  author={Erd{\H{o}}s, L{\'a}szl{\'o} and Riabov, Volodymyr},
  journal={arXiv preprint arXiv:2506.06441},
  year={2025}
}

\end{document}